\documentclass[11pt]{article} 
\usepackage{graphicx, amssymb, latexsym, float, xcolor, amsmath, tikz}
\usepackage{pdfpages}
\pagestyle{plain}
\allowdisplaybreaks

\setlength{\textwidth}{15cm}
\setlength{\topmargin}{0cm}
\setlength{\headheight}{0cm}
\setlength{\headsep}{0cm}
\setlength{\topskip}{0cm}
\setlength{\textheight}{20cm}
\setlength{\oddsidemargin}{0.5cm}
\setlength{\evensidemargin}{0.5cm}
\parindent 0cm

\newtheorem{defin}{}
\newtheorem{saetze}[defin]{}
\newtheorem{conjec}[defin]{}
\newtheorem{lemmas}[defin]{}
\newtheorem{folger}[defin]{}
\newtheorem{bemerk}[defin]{}
\newtheorem{prop}[defin]{}

\newenvironment{theorem}  {\begin{saetze}\it {\bf Theorem:}}{\end{saetze}}
\newenvironment{definition}{\begin{defin}\it {\bf Definition:}}{\end{defin}}

\newenvironment{lemma}    {\begin{lemmas}\it {\bf Lemma:}}{\end{lemmas}}
\newenvironment{corollary}{\begin{folger}\it {\bf Corollary:}}{\end{folger}}
\newenvironment{remark}   {\begin{bemerk}\rm {\it Remark:}}{\end{bemerk}}
\newenvironment{proof}    {{\it Proof}:}{{\hfill \fillbox \bigskip}}

\newcommand{\fillbox}{\mbox{$\bullet$}}
\newcommand{\ra}{\rightarrow}

\newcommand{\ms}{\mapsto}
\newcommand{\ol}{\overline}

\newcommand{\hH}{\hat{H}}
\newcommand{\N}{\mathbb N}

\newcommand{\Z}{\mathbb Z}
\newcommand{\Q}{\mathbb Q}

\newcommand{\G}{\mathcal G}
\newcommand{\B}{\mathcal B}
\newcommand{\C}{\mathcal C}
\newcommand{\T}{\mathcal T}

\newcommand{\LL}{\mathcal L}

\renewcommand{\O}{\mathcal O}
\newcommand{\U}{\mathcal U}
\newcommand{\p}{\mathfrak p}

\newcommand{\Hom}{\mathrm{Hom}}

\newcommand{\ttt}{\theta}
\newcommand{\kkk}{\kappa}

\newcommand\blfootnote[1]{%
  \begingroup
  \renewcommand\thefootnote{}\footnote{#1}%
  \addtocounter{footnote}{-1}%
  \endgroup }

\newenvironment{items}{\begin{list}{$\alph{item})$}
{\labelwidth18pt \leftmargin18pt \topsep3pt \itemsep1pt \parsep0pt}}
{\end{list}}

\begin{document}

\title{Lie rings related to the \\ $p$-groups of maximal class}
\author{Bettina Eick, Patali Komma and Subhrajyoti Saha}
\date{\today}
\maketitle

\begin{abstract}
The Lazard correspondence induces a close relation between the $p$-groups 
of maximal class and a certain type of Lie ring constructed from $p$-adic 
number fields. Our aim here is to investigate such Lie rings. In particular, 
we show that they are always finite. It then follows that they are nilpotent
of small class. These results close an important gap in (Eick, Komma \& Saha
2025). \blfootnote{{\bf Acknowledgement:} The second author was supported 
by an Alexander von Humboldt Foundation Research Fellowship.
The third author acknowledges the support of the Royal Society through 
Research Grant RGS \textbackslash R2 \textbackslash 252076}

\end{abstract}

\section{Introduction}

The classification of $p$-groups of maximal class is a long-standing 
project in group theory. It has many interesting and deep results so far 
and also many open problems still left. For small primes $p \in \{2,3\}$,
a complete classification was achieved by Blackburn \cite{Blackburn1958}. 
Hence we focus on the case $p \geq 5$ here. We refer to Leedham-Green \& 
McKay \cite{LGM2002} for an introduction to the state of the art, references 
and background.

The $p$-groups of maximal class can be visualized via their associated graph 
$\G(p)$: its vertices correspond one-to-one to the infinitely many 
isomorphism types of $p$-groups of maximal class and there is an edge 
$G \rightarrow H$ if $H / Z(H) \cong G$ holds. It is known that $\G(p)$
consists of an isolated point and an infinite tree $\T$ having a unique
infinite path, called its mainline. We denote this infinite path by $S_2 
\ra S_3 \ra \ldots$ with $S_i$ being a group of order $p^i$. The $i$th 
branch $\B_i$ of $\T$ is the subtree consisting of all descendants of $S_i$ 
that are not descendants of $S_{i+1}$; It is a finite tree. The graph 
$\G(p)$ consists of its isolated
point, its mainline and its sequence of branches $\B_2, \B_3, \ldots$.
Understanding the branches is the main problem in the classification
of $p$-groups of maximal class.

Let $G$ be a group in a branch $\B_i$ for $i \geq p-1$ and let $P(G)$ be its
two-step centralizer; that is, the centralizer in $G$ of the lower central
series quotient $\gamma_2(G)/\gamma_4(G)$. Then $P(G)$ is a maximal subgroup
of $G$. By Shepard \cite{Shepherd1970}, the subgroup $P(G)$ has class at most 
$(p-1)/2$. Thus the Lazard correspondence applies to $P(G)$ yielding a Lie 
ring $L(P(G))$. A central aim is to understand these Lie rings.

In \cite{EKS25} it is shown that, if $G$ is not a leaf in its branch 
$\B_i$, then its Lie ring $L(P(G))$ arises from a construction based on 
$p$-adic number theory as we outline in the following.

\subsection{Construction of Lie $p$-rings}
\label{liering}

Let $K = \Q_p(\ttt)$ where $\Q_p$ denote the field of $p$-adic rationals
and $\ttt$ is a primitive $p$-th root of unity. Let $\O$ be the maximal order
in $K$ and note that $\O$ has a unique maximal ideal $\p$ generated by 
$\kkk = \ttt-1$. The ring $\O$ has a unique chain of ideals $\O = \p^0 
> \p^1 > \p^2 > \ldots > \{0\}$, with $\p^i$ generated by $\kkk^i$ and
has index $p^i$ in $\O$. We consider $\p^i \wedge \p^i$ as $\ttt$-module 
under diagonal action and define
\[ H_i = \Hom_\ttt(\p^i \wedge \p^i, \p^{2i+1}) \;\;\; \mbox{ and } \;\;\;
 \hH_i = \{ \gamma \in H_i \mid \gamma \mbox{ surjective }\}.\]

For $\gamma \in \hH_i$ and $u,v,w \in \p^i$ write
\[J_{\gamma}(u, v, w)=\gamma(\gamma(u\wedge v)\wedge w)+\gamma(\gamma(v\wedge w)\wedge u)+\gamma(\gamma(w\wedge u)\wedge v).\]
Define $J(\gamma)$ as the ideal in $\O$ generated by the set
$\{ J_\gamma(u,v,w) \mid u,v,w \in \p^i \}$. Then there exists 
$\lambda(\gamma) \in \N_0 \cup \{ \infty \}$ so that
\[ J(\gamma)=\p^{\lambda(\gamma)};\]
where $\p^\infty = \{0\}$.  We define $L_{i,m}(\gamma)$ 
as the set $\p^i / \p^m$ with natural addition and multiplication 
$[x+\p^m, y+\p^m] = \gamma(x\wedge y)+ \p^m$. If $m \leq \lambda(\gamma)$, then 
$L_{i,m}(\gamma)$ is a Lie ring, and it has order $p^{m-i}$. A central 
result in \cite{EKS25} states the following.

\begin{theorem} {\bf (EKS \cite{EKS25})} \\
\label{EKSI}
Let $p \geq 5$, let $i >  p+1$ and let $G$ a group in $\B_i$ so that $G$
is not a leaf. Write $P$ for the two-step centralizer in $G$ and $m = 
\log_p|P| + i$. Then there exists $\gamma \in \hH_i$, so that 
\[ L(P) \cong L_{i,m}(\gamma).\]
\end{theorem}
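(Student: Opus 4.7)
The plan is to extract $\gamma$ directly from the Lie bracket on $L = L(P)$, after equipping $L$ with an appropriate $\O$-module structure. The construction proceeds in three stages: transfer the $p$-group action of $G$ on $P$ to an $\O$-action on $L$, identify $L$ additively with $\p^i/\p^m$, and then read off $\gamma$ from the bracket.

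First I would use that $G \in \B_i$ is not a leaf and $i > p+1$: by the standard structure theory of $p$-groups of maximal class, $P$ has index $p$ in $G$, and conjugation by a chosen $s \in G \setminus P$ induces an automorphism $\sigma$ of $P$ of order $p$. Since $P$ has class at most $(p-1)/2$ by Shepard, the Lazard correspondence yields a Lie ring $L$, and $\sigma$ acts as a Lie ring automorphism of $L$. The key structural input is that $\sigma$ satisfies $\Phi_p(\sigma) = 0$ on $L$, so the assignment $\ttt \ms \sigma$ turns $L$ into an $\O$-module with $\kkk = \ttt - 1$ acting as $\sigma - \mathrm{id}$.

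Next I would match $L$ with $\p^i/\p^m$ as $\O$-module. The successive factors of the filtration $\kkk^j L$ correspond, up to a fixed shift, to the lower central factors of $P$ inside the lower central series of $G$, which are cyclic of order $p$ in each relevant degree; a degree count together with $|L| = p^{m-i} = |P|$ identifies $L$ with $\p^i/\p^m$ on the nose. Under this identification, the Lie bracket is alternating, $\Z$-bilinear, and $\sigma$-equivariant, hence descends to an $\O$-linear map $\ol\gamma : \p^i \wedge \p^i \ra \p^i / \p^m$. The fact that commutators in the two-step centralizer for maximal-class groups land in a deeper term of the lower central series than bilinearity alone predicts forces the image of $\ol\gamma$ to lie in (and equal) $\p^{2i+1}/\p^m$; lifting through the surjection $\p^{2i+1} \twoheadrightarrow \p^{2i+1}/\p^m$ produces a surjective $\gamma \in \hH_i$. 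The Jacobi identity on $L$ translates into $J_\gamma(u,v,w) \in \p^m$ for all $u,v,w \in \p^i$, hence $m \leq \lambda(\gamma)$, and unwinding definitions gives $L(P) \cong L_{i,m}(\gamma)$.

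The main obstacle is the $\O$-module identification. Proving that $\sigma$ really does satisfy the cyclotomic relation $\Phi_p(\sigma) = 0$ on $L$ (and not only $\sigma^p = \mathrm{id}$), and that the $\kkk$-filtration matches the lower central series in the correct degrees, requires the classical fine structure of maximal-class $p$-groups together with the detailed analysis of two-step centralizers carried out in \cite{EKS25}. Once these ingredients are secured, the extraction of $\gamma$ and the verification that $L_{i,m}(\gamma) \cong L(P)$ reduce to bookkeeping with the definitions.
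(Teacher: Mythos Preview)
The paper does not prove this theorem; it is quoted verbatim as ``a central result in \cite{EKS25}'' and the present paper only \emph{uses} it as input for the subsequent investigation of the Lie rings $L_{i,m}(\gamma)$. So there is no proof here for your attempt to be compared against.

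That said, your outline is the natural one and is almost certainly what \cite{EKS25} does: pass to $L(P)$ via Lazard, make it an $\O$-module through the conjugation action of an outer element $s$ of order $p$, identify the $\kkk$-filtration with the induced lower central filtration to get $L \cong \p^i/\p^m$ as $\O$-modules, and then read the bracket as a $\ttt$-equivariant map $\p^i \wedge \p^i \to \p^{2i+1}/\p^m$ which lifts to some $\gamma \in \hH_i$. You have also correctly flagged the two genuinely delicate points---that $\Phi_p(\sigma)$ (not just $\sigma^p - 1$) kills $L$, and that the bracket lands in $\p^{2i+1}$ rather than merely $\p^{2i}$---and attributed them to the fine structure of maximal-class groups developed in \cite{EKS25}. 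If you want to turn this into a self-contained argument you would need to supply those two facts; within the present paper they are simply imported.
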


\subsection{Investigation of Lie $p$-rings}

Our aim is now to understand the Lie rings $L_{i,m}(\gamma)$ as arising
in Theorem \ref{EKSI}. These satisfy $m \leq \lambda(\gamma)$ and are
quotients of $L_{i,\lambda(\gamma)}(\gamma)$. A first step is now to 
investigate $\lambda(\gamma)$. Note that if $L_{i,\lambda(\gamma)}(\gamma)$ 
is finite, then it has size $p^{\lambda(\gamma)-i}$ by construction. Our 
central result here is the following, see Section \ref{pmt2} for a proof 
including explicit bounds on $y$. 

\begin{theorem} {\bf (Main Theorem I)} \\
\label{mt2}
Let $p\ge 5$ and $i \in \N_0$.
If $\gamma\in \hat{H}_i$ and $\lambda = \lambda(\gamma)$, then 
$L_{i,\lambda}(\gamma)$ is finite; more precisely, there exists
$y \in \N_0$ with $\lambda = 3i + 13 - 2p + y$ or, equivalently,
there exists $y \in \N_0$ with
\[ |L_{i,\lambda}(\gamma)| = p^{2i+13-2p+y}.\]
\end{theorem}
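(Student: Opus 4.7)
My plan is to prove both the finiteness of $\lambda(\gamma)$ and the inequality $\lambda(\gamma)\ge 3i+13-2p$ by an explicit $\p$-adic valuation estimate: every Jacobiator $J_\gamma(u,v,w)$ has $\p$-valuation at least $3i+13-2p$, while at least one test triple $(u,v,w)$ produces a nonzero Jacobiator, so that $J(\gamma)$ is a nonzero ideal of the predicted depth.

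First I would parametrise $\gamma$ via the $\ttt$-action. Since $\O$ is totally ramified over $\Z_p$ with uniformiser $\kkk$ and $\val(p)=p-1$, after base change to a splitting field of $\Phi_p$ the module $\p^i$ decomposes into $p-1$ one-dimensional $\ttt$-eigenspaces $\bar\Q_p e_a$, and $\p^i\wedge\p^i$ and $\p^{2i+1}$ split accordingly. A $\ttt$-equivariant $\gamma$ is then determined by a family of scalars $c_{j,k}$ with $1\le j<k\le p-1$ and $j+k\not\equiv 0\pmod p$, with Galois-descent encoding integrality and surjectivity of $\gamma\in\hH_i$ imposing a non-degeneracy condition on the collection $(c_{j,k})$.

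Next I would evaluate $J_\gamma$ on a distinguished test triple such as $(\kkk^i,\kkk^{i+1},\kkk^{i+2})$, using the expansion $\kkk^j=\sum_a(\zeta^a-1)^j e_a$ over the eigenbasis. The inner application $\gamma(u\wedge v)\in\p^{2i+1}$ must be re-expressed as an element of $\p^i$ before the outer $\gamma$ can act, and this re-identification introduces normalisations by products of the $(\zeta^a-1)$, whose total $\p$-contribution scales with $p-1=\val(p)$. The three cyclic summands share common leading terms that cancel (a strictly associative $\gamma$ would give $J_\gamma\equiv 0$), so only the higher-order residual survives; tracking its $\p$-valuation through the two nested normalisations yields the bound $3i+13-2p+y$ for some $y\in\N_0$. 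The non-degeneracy supplied by surjectivity guarantees the residual is nonzero on this test triple, so $J(\gamma)\ne 0$ and $\lambda(\gamma)$ is finite of the stated form.

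The main obstacle lies in the cancellation analysis. By construction the Jacobi identity annihilates the obvious leading term of $J_\gamma$, so determining the true depth requires careful expansion of the $(\zeta^a-1)^n$ and of the $p$-adic valuations of the binomials $\binom{p}{\ell}$ governing the relation $\kkk^{p-1}\equiv -p\pmod{p\kkk}$, together with the Galois-integrality constraints on the $c_{j,k}$. The explicit constants $13$ and $-2p$ emerge from this balance, and the sharper bounds on $y$ promised in Section \ref{pmt2} follow from a finer analysis of which index patterns $(j,k)$ survive at each successive order.
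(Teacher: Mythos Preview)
Your valuation estimate for the lower bound is roughly in the right spirit, though your talk of ``re-expressing $\gamma(u\wedge v)\in\p^{2i+1}$ as an element of $\p^i$'' is confused: since $\p^{2i+1}\subset\p^i$ there is no re-identification and no normalisation factor enters. The paper's argument is simply that $\gamma=\sum_a c_a\vartheta_a$ with $\val(c_a)\ge 5-p$ (Lemma~\ref{low}), so on Galois eigenvectors $e_x,e_y,e_z$ (eigenvectors of $\sigma\in\mathrm{Gal}(K)$, not of $\ttt$) the Jacobiator equals $\Gamma_\gamma(x,y,z)\,e_xe_ye_z$, where $\Gamma_\gamma$ is a $\Z_p$-combination of products $c_a\sigma_a(c_b)$ and hence has valuation $\ge 2(5-p)$; together with $x+y+z\ge 3i+3$ this gives $\lambda\ge 3i+13-2p$ (Theorem~\ref{lboundlambda}).

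The genuine gap is your finiteness claim. You assert that ``the non-degeneracy supplied by surjectivity guarantees the residual is nonzero on this test triple'' $(\kkk^i,\kkk^{i+1},\kkk^{i+2})$, but this is exactly the hard part and does not follow from surjectivity in any direct way: surjectivity of $\gamma$ onto $\p^{2i+1}$ says nothing \emph{a priori} about nonvanishing of the nested quantity $J_\gamma$ on any fixed triple. Even in the one-parameter case $\gamma=c\vartheta_a$ the paper needs an explicit polynomial identity in $a$ to handle the triple $(e_i,e_{i+1},e_{i+2})$ (Section~\ref{pfmt3}), and for general $\gamma$ it abandons the single-test-triple strategy entirely. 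Instead the paper introduces the offset $\rho(\gamma)$ and the leading mod-$p$ coefficients $a(j,k)$ of $\gamma(\kkk^j\wedge\kkk^k)$, forms the associated Jacobi symbols $J(j,k,l)\in\F_p$, and argues by contradiction: assuming all $J(j,k,l)$ vanish, one fixes $m$ with $m+\rho\equiv 1\pmod{p-1}$, uses the recursion $a(j,k)\equiv a(j+1,k)+a(j,k+1)$ of Lemma~\ref{lem9} together with the closed formula of Lemma~\ref{lastcor} to turn the relations $J(j+1,j,m)=0$ into a triangular system in the diagonal entries $c(j)=a(j+1,j)$, and deduces inductively that every $c(j)$ vanishes---contradicting Lemma~\ref{nonzero}(c), which is where surjectivity actually enters. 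This combinatorial recursion is the real content of the finiteness proof, and your sketch contains neither it nor any substitute for it.
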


By \cite[Th. 1]{EKS25},  the finiteness of $L_{i,\lambda}(\gamma)$
implies that $L_{i,\lambda}(\gamma)$ is nilpotent and, if $i > p-1$, then
the class of $L_{i,\lambda}(\gamma)$ is at most $p-1$. In the later case,
the Lazard correspondence applies defining a group $G(L)$ for each of the
arising Lie rings $L$. This induces the following result.

\begin{corollary} 
Let $p \geq 5$ and $i > p+1$. Write $\LL_i = \{ L_{i,m}(\gamma) \mid 
\gamma \in \hH_i, m \leq \lambda(\gamma) \}$ and let $\C_i$ the set of 
non-leafs in $\B_i$. Then the Lazard correspondence induces a surjection 
\[ \LL_i \ra \C_i : L \ra G(L) \rtimes  \langle \ttt \rangle, \]
where $\ttt$ acts by multiplication on $\O$ and thus on $L$ and on $G(L)$.
\end{corollary}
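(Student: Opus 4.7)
The plan is to assemble the map from Main Theorem \ref{mt2}, the functoriality of the Lazard correspondence and Theorem \ref{EKSI}, then verify surjectivity by the inverse construction.

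First I would check that the codomain is well-defined. Take $L = L_{i,m}(\gamma) \in \LL_i$. Since $m \leq \lambda(\gamma)$, $L$ is a quotient of $L_{i,\lambda(\gamma)}(\gamma)$, which is finite by Main Theorem \ref{mt2}. Hence $L$ is a finite Lie $p$-ring, and by \cite[Th.~1]{EKS25} it is nilpotent of class at most $p-1$ (using $i > p+1 > p-1$). The Lazard correspondence therefore produces a finite $p$-group $G(L)$ of the same order as $L$.

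Next I would promote multiplication by $\ttt$ on $\p^i/\p^m$ to a group automorphism of $G(L)$. Since $\gamma$ is $\ttt$-equivariant for the diagonal action on $\p^i \wedge \p^i$,
\[ [\ttt x, \ttt y] = \gamma(\ttt x \wedge \ttt y) + \p^m = \ttt \gamma(x \wedge y) + \p^m = \ttt[x,y] \]
for $x, y \in L$, so $\ttt$ acts as a Lie ring automorphism of order dividing $p$. By functoriality of Lazard this lifts to a group automorphism of $G(L)$ of the same order, and $G(L) \rtimes \langle \ttt \rangle$ is a well-defined finite $p$-group of order $p \cdot |L|$.

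The hard part will be surjectivity onto $\C_i$, together with verifying that the constructed semidirect product actually lies in $\C_i$. For surjectivity, I would take a non-leaf $G \in \B_i$ and let $P = P(G)$. Theorem \ref{EKSI} supplies $\gamma \in \hH_i$ and $m = \log_p|P| + i$ with $L(P) \cong L_{i,m}(\gamma) =: L$, so Lazard identifies $G(L) \cong P$. What remains --- and what I expect to be the most delicate step --- is to exhibit $s \in G \setminus P$ whose conjugation on $P$ corresponds under Lazard to multiplication by $\ttt$ on $\p^i/\p^m$. This should be extracted from the construction in \cite{EKS25}: the $\ttt$-module structure on $\p^i$ is built there precisely to encode the action of $G/P$ on $P$, so a compatible generator $s$ must exist by construction. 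Once identified, $G \cong G(L) \rtimes \langle \ttt \rangle$. That the image always lies in $\C_i$ (rather than being a leaf or landing in another branch) follows from the same identification applied to a suitable descendant of $G(L) \rtimes \langle \ttt \rangle$ constructed from a larger value of $m$.
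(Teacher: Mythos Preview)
Your proposal is correct and aligns with the paper's approach: the paper gives no explicit proof for this corollary, treating it as an immediate consequence of the paragraph preceding it (Main Theorem~\ref{mt2} for finiteness, \cite[Th.~1]{EKS25} for the class bound so that Lazard applies) together with Theorem~\ref{EKSI} for surjectivity. You are in fact more careful than the paper in isolating the two points that must be imported from \cite{EKS25}---that conjugation by a generator of $G/P$ corresponds to multiplication by $\ttt$, and that the semidirect product lands in $\C_i$---both of which the paper silently defers to that reference.
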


\subsection{Special cases}

For $j \in \Z$ coprime to $p$ let $\sigma_j$ be the Galois automorphism 
of $K$ mapping $\ttt$ to $\ttt^j$. For $a \in\{2, \ldots,(p-1) / 2\}$ 
define $\vartheta_a: K \wedge K \rightarrow K$ via 
\[ \vartheta_a(x \wedge y)=\sigma_a(x) \sigma_{1-a}(y)
                              -\sigma_{1-a}(x) \sigma_a(y) .\]

As observed in \cite[Lemma 6]{EKS25}, $\vartheta_a \in \hH_i$ for each 
$i \in \N_0$ and the homomorphisms $\vartheta_a$ span $\hH_i$ over $K$. 
If $p=5$, then it is sufficient to choose $\gamma = c \vartheta_2$ for
some $c \in \U$, the unit group of $\O$. We thus obtain the following
special cases. 

\begin{theorem} {\bf (Main Theorem II)} \\
\label{mt3} 
Let $p\ge 5$ and $i \in \N_0$. Let $\gamma = c \vartheta_a$ for some 
$c \in \U$ and $2 \leq a \leq (p-1)/2$ and write $\lambda = \lambda(
\gamma)$. Then $\gamma \in \hH_i$ and
$|L_{i,\lambda}(\gamma)| = p^{2i+3+y}$ for some $y \in \{0, 1, 2\}$. 
If $p=5$, then $y = 0$ holds.
\end{theorem}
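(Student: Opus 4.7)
The plan is to bracket $\lambda = \lambda(\gamma)$ between $3i+3$ and $3i+5$ via the two inclusions $\p^{3i+5} \subseteq J(\gamma) \subseteq \p^{3i+3}$; since $|L_{i,\lambda}(\gamma)| = p^{\lambda - i}$, this yields $|L_{i,\lambda}(\gamma)| = p^{2i+3+y}$ with $y \in \{0,1,2\}$, and for $p = 5$ the lower inclusion sharpens to $\p^{3i+3} \subseteq J(\gamma)$, forcing $y = 0$. Membership $\gamma \in \hH_i$ is immediate from $\vartheta_a \in \hH_i$ (\cite[Lemma 6]{EKS25}) together with the preservation of $\ttt$-equivariance and surjectivity under multiplication by a unit $c \in \U$.

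To reduce the scalar $c$, I would expand
\[ \vartheta_a(c\,\vartheta_a(u\wedge v)\wedge w) = \sigma_a(c)\,\sigma_a(\vartheta_a(u\wedge v))\,\sigma_{1-a}(w) - \sigma_{1-a}(c)\,\sigma_{1-a}(\vartheta_a(u\wedge v))\,\sigma_a(w) \]
and rearrange the cyclic sum as $J_{c\vartheta_a}(u,v,w) = c\sigma_a(c)\,J_{\vartheta_a}(u,v,w) + c(\sigma_a(c) - \sigma_{1-a}(c))\,B(u,v,w)$, where $B(u,v,w) := \sum_{\mathrm{cyc}} \sigma_{1-a}(\vartheta_a(u\wedge v))\sigma_a(w)$. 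Since $\sigma_a(c) - \sigma_{1-a}(c) \in \p$ and $B \in \p^{3i+2}$ (by a Vandermonde-type cancellation of the $3 \times 3$ determinant presentation of $B$), the correction lies in $\p^{3i+3}$. The required bounds for $J_{c\vartheta_a}$ thus follow from the corresponding bounds for $J_{\vartheta_a}$.

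The core step is the inclusion $J_{\vartheta_a}(u,v,w) \in \p^{3i+3}$ for all $u, v, w \in \p^i$. Writing $A(u,v,w) := \sum_{\mathrm{cyc}} \sigma_a(\vartheta_a(u \wedge v)) \sigma_{1-a}(w)$ so that $J_{\vartheta_a} = A - B$, both $A$ and $B$ unfold as $3 \times 3$ \emph{generalised Vandermonde} determinants with row exponents $\{a^2, a(1-a), 1-a\}$ and $\{a(1-a), (1-a)^2, a\}$ evaluated at $(u,v,w)$. Expanding each Galois conjugate $\sigma_j$ in the $\Z_p$-basis $\{1, \kkk, \ldots, \kkk^{p-2}\}$ of $\O$ via $\sigma_j(\kkk) = \kkk \cdot u_j$ with $u_j \equiv j \pmod{\p}$, one sees that the mod-$\p$ reduction of every row is proportional to the common vector $(\epsilon(u_0), \epsilon(v_0), \epsilon(w_0))$; the first-order perturbation still leaves two of the three rows proportional, yielding $A, B \in \p^{3i+2}$, and the $a \leftrightarrow 1-a$ symmetry of the surviving leading coefficients forces $A - B \in \p^{3i+3}$. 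For the opposite inclusion $\p^{3i+5} \subseteq J(\vartheta_a)$, substituting test values $u = \kkk^i$, $v = \ttt\kkk^i$, $w = \ttt^s \kkk^i$ for a suitably small $s$ and computing $J_{\vartheta_a}$ modulo $\kkk^{3i+6}$ isolates the leading coefficient.

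For $p = 5$ the only admissible parameter is $a = 2$ and $\O$ has $\Z_5$-rank $4$, so every element is a polynomial of degree at most $3$ in $\kkk$. An explicit evaluation of $J_{\vartheta_2}(\kkk^i, \ttt\kkk^i, \ttt^2 \kkk^i)$ then yields valuation exactly $3i + 3$ and, combined with the lower bound above, forces $y = 0$. The main obstacle is the uniform lower valuation bound: tracking the three successive cancellations (two from the Vandermonde structure of $A$ and $B$ individually, plus the extra cancellation in $A - B$) through the twelve-term Galois expansion, uniformly in $u, v, w \in \p^i$, is the combinatorial heart of the argument; the upper bound and the $p = 5$ specialisation reduce to isolated single-triple computations.
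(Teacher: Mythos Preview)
Your overall plan --- bracketing $\lambda$ between $3i+3$ and $3i+5$ --- matches the paper's, and your reduction of the unit $c$ via $J_{c\vartheta_a}=c\sigma_a(c)\,J_{\vartheta_a}+c(\sigma_a(c)-\sigma_{1-a}(c))B$ is correct. But you have inverted the difficulty of the two inclusions, and the one you treat as routine is in fact the heart of the proof.

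The inclusion $J(\gamma)\subseteq\p^{3i+3}$ is the \emph{easy} direction. The paper dispatches it in one line (Theorem~\ref{lboundlambda}): since $\val(\gamma)=0$ for $\gamma=c\vartheta_a$ with $c\in\U$, one has $\val(\Gamma_\gamma(x,y,z))\geq 0$ and hence $\lambda\geq 3i+3$. Your Vandermonde determinant reformulation of $A$ and $B$ is correct and pleasant, but the three successive cancellations you allude to are more work than is needed here.

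The real content is the opposite inclusion $\p^{3i+5}\subseteq J(\gamma)$, which you describe as an ``isolated single-triple computation.'' This is where your proposal has a genuine gap. The leading coefficient of $J_\gamma(e_i,e_{i+1},e_{i+2})$ at level $3i+3$ is (up to units) $a^i F_1+(1-a)^i G_1$ with $F_1=a(1-a)^2(2a-1)(a^2+a-1)$ and $G_1=a^2(1-a)(2a-1)(a^2-3a+1)$, and this \emph{does} vanish modulo $p$ for certain $(p,a,i)$ --- precisely the cases $y=2$ listed after the theorem. So no single fixed triple will do. What the paper does is take the \emph{two} triples $(i,i+1,i+2)$ and $(i,i+1,i+4)$, assume both leading coefficients vanish, and eliminate the $i$-dependent factor to obtain the resultant
\[
F_2G_1-F_1G_2 \;=\; a^4(1-a)^4(2a-1)^3\,(a^2+a-1)(a^2-3a+1)\;\equiv\;0 \pmod p.
\]
It then argues that $a^2+a-1\equiv 0$ forces (via $E_1\equiv 0$) $a^2-3a+1\equiv 0$, while direct subtraction gives $(a^2+a-1)-(a^2-3a+1)=2(2a-1)\not\equiv 0$, a contradiction. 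This two-triple elimination and the resulting polynomial identity are the missing ingredient in your sketch; without them, ``isolating the leading coefficient'' for a single test triple does not establish non-vanishing uniformly in $(p,a,i)$.

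For $p=5$, your plan is fine in outline; the paper carries it out with the eigenvector triple $(e_i,e_{i+1},e_{i+2})$ rather than $(\kkk^i,\ttt\kkk^i,\ttt^2\kkk^i)$, checking directly that $F_1\equiv 0$ but $G_1\not\equiv 0$ modulo $5$ when $a=2$, so $\lambda=3i+3$.
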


GAP experiments suggest that $y \in \{0,2\}$ if $\gamma = \vartheta_a$.
While the case $y=2$ does not arise for $p=5$, it does arise for larger
primes. For example:
\begin{items}
\item[$\bullet$]
if $p=7$, then $y=2$ if $(a,i) \in \{(2,3), (3,2), (3, 5)\}$.
\item[$\bullet$]
if $p=11$, then $y=2$ if $(a, i) \in \{(4,10)\}$.
\item[$\bullet$]
if $p=13$, then $y=2$ if $(a, i) \in \{(2,4), (4,2), (4,5),(4,8),
 (4,11), (5,4), (6,3), (6,9)\}$.
\end{items}

\section{Preliminaries on number theory}

This section recalls some preliminaries on number theory.
As before, let $K = \Q_p(\ttt)$, where $\ttt$ is a primitive $p$-root of 
unity, let $\O$ be the maximal order in $K$ and let $\p$ denote the unique
maximal ideal in $\O$. Then $\kkk$ generates $\p$ as ideal and $\U = \O 
\setminus \p$ are the units in $\O$.

\begin{definition}
For $v \in K$ we define the valuation of $v$ as $val(v) = n$ if $v \in \p^n \setminus \p^{n+1}$
and we define $val(0) = \infty$. For a vector $v$ over $K$ we define 
$val(v)$ as the minimum of the valuations of the entries of $v$.
\end{definition}

The ideal $\p^i$ of $\O$ has the natural $\Z_p$-basis $\{ \ttt^j \kkk^i \mid
0 \leq j < p-1\}$. In applications, it is often useful to consider a
different basis as in the following remark.

\begin{remark}
\label{gens}
Let $i \in \N_0$ and $d=p-1$ the $\Z_p$-dimension of $\O$.
\begin{items}
\item[\rm (a)]
The set $\{ \kkk^j \mid i \leq j < i+d \}$ is a $\Z_p$-basis for $\p^i$.
\item[\rm (b)]
The set $\{ \kkk^j \wedge \kkk^k \mid i \leq j < k < i+d\}$ generates 
$\p^i \wedge \p^i$ as $\Z_p$-module.
\item[\rm (c)]
$\kappa^d \equiv - p \bmod \p^{d+1}$.
\end{items}
\end{remark}

\begin{proof}
(a) and (b) are well-known and it remains to prove (c). First note that
$\theta^r = (1+\kappa)^{r}=1+r\kappa+{{r}\choose{2}}\kappa^2+\cdots 
+{{r}\choose{r}}\kappa^r$ for $1 \leq r \leq p-1$ and 
$\sum_{r=0}^{p-1}{{p-1}\choose{r}}={{p}\choose{r+1}}$. 
As $p \in \p^p$, it follows that
\[
0 = 1+\theta+\cdots+\theta^{p-1}
   = p + \binom{p}{2}\kappa + \cdots + \binom{p}{p}\kappa^{p-1}
   \equiv p + \kappa^{p-1} \bmod{\p^{p}}.
\]
As $d = p-1$, this yields the result of (c).
\end{proof}

Let $\omega\in \Q_p$ be a primitive $(p-1)$-th root of unity 
and $r$ be the generator of the multiplicative group of integers 
$(\Z/ p \Z)^*$ such that $\omega-r\in \p$. Then let 
$\sigma=\sigma_r$ and note that $\sigma$ generates $Gal(K)$.
The following is proved in \cite[Section 2]{LGM1984}.

\begin{theorem}
\begin{items}
\item[\rm (a)] 
The eigenvalues of $\sigma$ are $1, \omega, \ldots, \omega^{p-2}$, with 
each eigenspace of dimension $1$.
\item[\rm (b)]
If $v\in K$ is an eigenvector of $\sigma$, then the corresponding 
eigenvalue is $\omega^{val(v)}$. 
\item[\rm (c)]
For each $n\in \Z$ there exists an eigenvector $v\in \p^n\setminus
\p^{n+1}$ of $\sigma$.
\end{items}
\end{theorem}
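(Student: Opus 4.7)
The plan is to first establish a semisimplicity statement for $\sigma$ acting on $K$, then to compute how $\sigma$ acts on the graded pieces $\p^n/\p^{n+1}$, and finally to use an averaging projection to produce eigenvectors of every valuation. Parts (b), (c), (a) will fall out in that order.

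The key computation is the action of $\sigma$ on $\kkk$. Since $\sigma(\ttt)=\ttt^r$, expanding the binomial gives
\[
\sigma(\kkk)=\ttt^r-1=(1+\kkk)^r-1=r\kkk+\binom{r}{2}\kkk^2+\cdots+\kkk^r.
\]
Because $r\equiv\omega\pmod{\p}$, i.e.\ $r-\omega\in\p$, we obtain $\sigma(\kkk)\equiv \omega\kkk\pmod{\p^2}$. Multiplicativity of $\sigma$ then gives $\sigma(\kkk^n)\equiv \omega^n\kkk^n\pmod{\p^{n+1}}$ for every $n\in\Z$, i.e.\ on the one-dimensional $\F_p$-space $\p^n/\p^{n+1}$ the automorphism $\sigma$ acts as multiplication by $\omega^n$. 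This is the central technical fact that drives everything else.

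For (b), suppose $v\in K$ is an eigenvector of $\sigma$ with eigenvalue $\omega^j$ and let $n=val(v)$. The image $\bar v$ of $v$ in $\p^n/\p^{n+1}$ is nonzero; it satisfies $\sigma(\bar v)=\omega^j\bar v$ by hypothesis and $\sigma(\bar v)=\omega^n\bar v$ by the computation above. Since $\bar v\neq 0$ we conclude $\omega^j=\omega^n$, i.e.\ $j\equiv n\pmod{p-1}$, which is the content of~(b). For (c), given any $n\in\Z$ I would use the Lagrange-type projection
\[
e_n \;=\; \frac{1}{p-1}\sum_{k=0}^{p-2}\omega^{-nk}\sigma^k(\kkk^n).
\]
A direct check shows $\sigma(e_n)=\omega^n e_n$. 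Moreover, reducing the sum modulo $\p^{n+1}$ and applying $\sigma^k(\kkk^n)\equiv\omega^{nk}\kkk^n$ term by term, all factors of $\omega$ cancel and the sum collapses to $\kkk^n$; hence $e_n\equiv\kkk^n\pmod{\p^{n+1}}$, so $e_n\in\p^n\setminus\p^{n+1}$ is an eigenvector with eigenvalue $\omega^n$ as required.

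Finally for (a), the minimal polynomial of $\sigma$ divides $x^{p-1}-1$, which splits into distinct linear factors over $\Q_p$ because $\omega\in\Q_p$; thus $\sigma$ is diagonalizable over $\Q_p$ with eigenvalues among $1,\omega,\ldots,\omega^{p-2}$. Part (c), applied with $n=0,1,\ldots,p-2$, produces an eigenvector for each $\omega^n$, so every eigenvalue actually occurs. Since $\dim_{\Q_p}K=p-1$ and the $p-1$ eigenspaces have dimensions summing to at most $p-1$, each eigenspace is exactly one-dimensional, finishing~(a). The only subtle point I anticipate is justifying that the graded computation $\sigma(\kkk)\equiv \omega\kkk\pmod{\p^2}$ lifts cleanly, i.e.\ that the projector $e_n$ really lands in $\p^n$ rather than in a higher denominator; the $\frac{1}{p-1}$ factor is a unit in $\Z_p$ since $p\nmid p-1$, so this causes no trouble.
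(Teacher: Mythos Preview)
Your proof is correct and self-contained. Note that the paper does not actually prove this theorem; it simply cites \cite[Section 2]{LGM1984}. Your argument---computing $\sigma(\kappa)\equiv\omega\kappa\pmod{\p^2}$, deducing the graded action, then building eigenvectors via the averaging projector $e_n=\frac{1}{p-1}\sum_k\omega^{-nk}\sigma^k(\kappa^n)$---is the standard route and essentially what one finds in Leedham-Green and McKay. One small point worth making explicit: you use that $\sigma$ preserves $\p^n$ (so that each $\sigma^k(\kappa^n)$ lies in $\p^n$ and hence $e_n\in\p^n$); this follows since $\sigma(\kappa)=\theta^r-1$ is again a uniformizer, but it would not hurt to say so.
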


The following result is an easy consequence.

\begin{corollary}
\label{R2}
Let $v$ be an eigenvector of $\sigma$ with $val(v)=n$. Then $v$ is an 
eigenvector of $\sigma^m$ with eigenvalue $\omega^{mn}$. Further if 
$a\in \{2, \ldots (p-1)/2\}$ with $\sigma_a=\sigma^k$ and 
$\sigma_{1-a}=\sigma^l$ for some integers $k$ and $l$, then
\begin{align*}
\sigma_a(v)=\omega^{kn}v, \quad
\sigma_{1-a}(v)=\omega^{ln}v, \quad 
\sigma_{a^2}(v)=\omega^{2kn}v, \quad \\
\sigma_{(1-a)^2}=\omega^{2ln}\omega, \quad 
\sigma_{a(1-a)}(v)=\omega^{(k+l)n}v. 
\end{align*}
\end{corollary}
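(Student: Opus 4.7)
The plan is to derive both assertions directly from the preceding theorem, using two small facts: that $\omega \in \Q_p$ is fixed by every element of $Gal(K/\Q_p)$, and that the assignment $a \mapsto \sigma_a$ is multiplicative, i.e.\ $\sigma_b \sigma_c = \sigma_{bc}$ (since $\sigma_b \sigma_c(\ttt) = \sigma_b(\ttt^c) = \ttt^{bc}$).

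For the first assertion, part (b) of the preceding theorem gives $\sigma(v) = \omega^n v$. Then a short induction on $m$ establishes $\sigma^m(v) = \omega^{mn} v$: the inductive step reads
\[ \sigma^{m+1}(v) = \sigma(\sigma^m(v)) = \sigma(\omega^{mn} v) = \omega^{mn}\sigma(v) = \omega^{(m+1)n} v, \]
where the third equality uses the $\sigma$-invariance of $\omega$. The same formula extends to negative $m$ since $\sigma$ is invertible.

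For the second assertion, recall that $\sigma$ generates $Gal(K/\Q_p)$, so $\sigma_a = \sigma^k$ and $\sigma_{1-a} = \sigma^l$ for some integers $k,l$, and applying the first assertion with $m = k$ and $m = l$ immediately yields the formulas for $\sigma_a(v)$ and $\sigma_{1-a}(v)$. The remaining three formulas follow by combining multiplicativity of $a \mapsto \sigma_a$ with the first assertion: we have $\sigma_{a^2} = \sigma_a^2 = \sigma^{2k}$, $\sigma_{(1-a)^2} = \sigma_{1-a}^2 = \sigma^{2l}$, and $\sigma_{a(1-a)} = \sigma_a \sigma_{1-a} = \sigma^{k+l}$, so the first assertion applied with $m = 2k$, $2l$, $k+l$ gives the eigenvalues $\omega^{2kn}$, $\omega^{2ln}$, and $\omega^{(k+l)n}$ respectively.

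There is no real obstacle in this proof; it is essentially bookkeeping once one records that $\omega$ is $\sigma$-invariant (so that the induction in step one goes through) and that $a \mapsto \sigma_a$ is a group homomorphism from $(\Z/p\Z)^*$ to $Gal(K/\Q_p)$. I would expect the written proof to fit in three or four lines.
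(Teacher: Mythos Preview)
Your proof is correct and follows essentially the same approach as the paper's: both start from $\sigma(v)=\omega^n v$ (via part (b) of the preceding theorem), iterate using the $\sigma$-invariance of $\omega\in\Q_p$ to obtain $\sigma^m(v)=\omega^{mn}v$, and then read off the remaining identities from $\sigma_a=\sigma^k$, $\sigma_{1-a}=\sigma^l$. Your treatment is slightly more explicit in spelling out the multiplicativity $\sigma_{a^2}=\sigma^{2k}$, $\sigma_{(1-a)^2}=\sigma^{2l}$, $\sigma_{a(1-a)}=\sigma^{k+l}$, which the paper leaves implicit in the phrase ``using the definitions of $l$ and $k$, the result follows.''
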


\begin{proof}
Let $v$ be an eigenvector of $\sigma$ with $val(v)=n$. Then 
$\sigma(v)=\omega^nv$, and $\sigma^2(v)=\sigma(\omega^nv)$. 
As $\omega\in \Q_p$ is fixed by $\sigma$, it follows that
$\sigma^2(v)=\omega^n\sigma(v)=\omega^{2n}v$. Iterating this idea,
it follows that $v$ is an eigenvector of $\sigma^m$ with eigenvalue 
$\omega^{mn}$. Using the definitions of $l$ and $k$, the result
follows.
\end{proof}

\begin{lemma}
\label{unitfixedgalois}
The group $Gal(K)$ acts trivially on $\O / \p$.
\end{lemma}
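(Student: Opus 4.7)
The plan is to show directly that the residue field $\O/\p$ equals $\F_p$ and that each Galois automorphism fixes the prime field elementwise. Equivalently, I will show that every $x\in\O$ is congruent modulo $\p$ to an element of $\Z_p$, which is automatically fixed by $Gal(K)$.

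First I would observe that every $\sigma\in Gal(K)$ preserves the subring $\O$ (since it is characterised intrinsically as the maximal order in $K$) and preserves the unique maximal ideal $\p$, so $\sigma$ induces a well-defined ring endomorphism $\bar\sigma$ of $\O/\p$. That $\sigma(\p)\subseteq\p$ can also be checked by hand: $\sigma(\ttt)=\ttt^j$ for some $j$ coprime to $p$, hence $\sigma(\kkk)=\ttt^j-1=(1+\kkk)^j-1\in\p$.

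Next I would invoke Remark \ref{gens}(a) for $i=0$: every $x\in\O$ can be written as $x=a_0+\kkk y$ with $a_0\in\Z_p$ and $y\in\O$, i.e.\ $x\equiv a_0\bmod\p$. Since $\sigma$ fixes $\Q_p$ pointwise, it fixes $a_0$; combined with $\sigma(\kkk y)\in\p$, this yields $\sigma(x)\equiv a_0\equiv x\bmod\p$. Hence $\bar\sigma$ is the identity on $\O/\p$, which is the claim.

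There is no real obstacle: the argument is essentially the observation that the extension $K/\Q_p$ is totally ramified of degree $p-1$, so the residue field equals $\F_p$ and the inertia subgroup coincides with the full Galois group. The only ingredient beyond the basic setup already in the paper is Remark \ref{gens}(a), which reduces the question to the trivial fact that $Gal(K)$ fixes $\Z_p$ pointwise.
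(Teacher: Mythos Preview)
Your argument is correct and is essentially the same as the paper's: both write an arbitrary $x\in\O$ as $a_0+\kappa(\cdots)$ with $a_0$ in $\Z_p$ (the paper via a $\kappa$-adic expansion with digits in $\{0,\ldots,p-1\}$, you via the $\Z_p$-basis of Remark~\ref{gens}(a)), observe $\sigma(\kappa)\in\p$, and conclude $\sigma(x)\equiv a_0\equiv x\bmod\p$. Your extra remark that $\sigma$ preserves $\O$ and $\p$ just makes explicit what the paper leaves implicit.
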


\begin{proof}
Each element $c\in \O$ can be written as $c = \sum_{r=0}^m a_r \kappa^r$ 
for some $m\in \N_0$ and $a_r\in \{0, \ldots, p-1\}$. Thus $c \equiv a_0
\bmod \p$ and $\sigma(c) = \sum_{r=0}^m a_r \sigma(\kappa)^r \equiv a_0 
\bmod \p$. 
\end{proof}

\section{Preliminaries on homomorphism spaces}
\label{dandx}

Throughout, we assume that $p \geq 5$ is prime and $i \in \N_0$.  Our aim in
this section is to provide background on the space 
\[ \hH_i \subseteq H_i = \Hom_\ttt( \p^i \wedge \p^i, \p^{2i+1}).\]

The structure of $\Hom_\ttt(\O \wedge \O, \O)$ was analysed by Leedham-Green
\& McKay, see \cite{LGM2002}. Based on this, Dietrich \& Eick \cite{DE2017} 
proved that
every element of $\Hom_\ttt(\O \wedge \O, \O)$ can be written uniquely 
as $\sum_{a=2}^{(p-1)/2} c_a \vartheta_a$ with coefficients $c_a \in 
\p^{4-p}$. The following remark translates this to $\hH_i$.

\begin{remark}
\begin{items}
\item[\rm (a)]
$\gamma \in \Hom_\ttt( \p^i \wedge \p^i, \p^{2i+1})$ induces 
$\hat{\gamma} \in \Hom_\ttt( \O \wedge \O, \O)$ via 
\[ \hat{\gamma} : \O \wedge \O \ra \O 
: (x \wedge y) \ms \kkk^{-(2i+1)} \gamma( \kkk^i x \wedge \kkk^i y).\]
\item[\rm (b)]
$\delta \in \Hom_\ttt(\O \wedge \O, \O)$ induces 
$\ol{\delta} \in \Hom_\ttt( \p^i \wedge \p^i, \p^{2i+1})$ via
\[ \ol{\delta} : \p^i \wedge \p^i \ra \p^{2i+1}
: (x \wedge y) \ms \kkk^{(2i+1)} \delta( \kkk^{-i} x \wedge \kkk^{-i} y).\]
\item[\rm (c)]
The two maps are inverse to each other and hence induce isomorphisms between
$\Hom_\ttt(\O \wedge \O, \O)$ and $\Hom_\ttt(\p^i \wedge \p^i, \p^{2i+1})$.
\end{items}
\end{remark}

\begin{lemma}
\label{low}
Each $\gamma \in \hH_i$ can be written uniquely in the form 
$\gamma = \sum_{a=2}^{(p-1)/2}
c_a \vartheta_a$ with $c_a \in K$ and $val(c_a) \geq 5-p$.
\end{lemma}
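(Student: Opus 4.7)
The plan is to transport the Dietrich \& Eick \cite{DE2017} decomposition of $\Hom_\ttt(\O \wedge \O, \O)$ across the isomorphism of the preceding remark. The main work is to track how the basis elements $\vartheta_a$ transform under $\overline{\cdot}$, since this controls the shift in the valuation bound on the coefficients; everything else is linearity.

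First I would compute $\overline{\vartheta_a}$ explicitly. Since each $\sigma_b$ is a ring automorphism of $K$,
\[ \vartheta_a(\kkk^{-i}x \wedge \kkk^{-i}y)
 = (\sigma_a(\kkk)\sigma_{1-a}(\kkk))^{-i}\,\vartheta_a(x \wedge y), \]
so the definition of $\overline{\cdot}$ in the previous remark gives $\overline{\vartheta_a} = \mu_a \vartheta_a$ with
\[ \mu_a \;=\; \kkk^{2i+1}\,(\sigma_a(\kkk)\sigma_{1-a}(\kkk))^{-i} \;\in\; K. \]
Because the $p$-adic valuation extends uniquely to $K$, it is $Gal(K)$-invariant, so $\val(\sigma_b(\kkk)) = \val(\kkk) = 1$ for every $b$ coprime to $p$. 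Thus $\val(\mu_a) = (2i+1) - 2i = 1$. An entirely analogous calculation shows that under the forward map, $\hat{\vartheta_a} = \mu_a^{-1}\vartheta_a$ inside $\Hom_\ttt(\O\wedge\O,\O)$.

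Now, given $\gamma \in \hH_i$, its preimage $\hat\gamma \in \Hom_\ttt(\O\wedge\O,\O)$ admits by \cite{DE2017} a unique expansion $\hat\gamma = \sum_a \hat c_a \vartheta_a$ with $\hat c_a \in \p^{4-p}$. Applying the ($K$-linear) map $\overline{\cdot}$ yields
\[ \gamma \;=\; \sum_a \hat c_a\, \overline{\vartheta_a} \;=\; \sum_a (\hat c_a \mu_a)\,\vartheta_a, \]
so $c_a := \hat c_a \mu_a$ satisfies $\val(c_a) \geq (4-p)+1 = 5-p$, as claimed. Conversely, any expansion $\gamma = \sum_a c_a \vartheta_a$ with $\val(c_a) \geq 5-p$ transports via $\hat{\cdot}$ to $\hat\gamma = \sum_a (c_a \mu_a^{-1})\vartheta_a$ with $\val(c_a \mu_a^{-1}) \geq 4-p$, and uniqueness in \cite{DE2017} forces the coefficients $c_a \mu_a^{-1}$, hence the $c_a$, to be uniquely determined.

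The only delicate point is the computation $\val(\mu_a)=1$, and this reduces to the fact that Galois conjugates of the uniformiser $\kkk$ have valuation $1$, which is immediate from the uniqueness of the valuation on the local field $K$.
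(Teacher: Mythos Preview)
Your proof is correct and follows essentially the same route as the paper: transport the Dietrich--Eick expansion across the isomorphism of the preceding remark, observing that each $\vartheta_a$ is sent to a scalar multiple of itself with scalar of valuation $1$. The paper writes this scalar as $\kkk\, u_{a,i}$ for a unit $u_{a,i}$ rather than giving your explicit formula $\mu_a = \kkk^{2i+1}(\sigma_a(\kkk)\sigma_{1-a}(\kkk))^{-i}$, and it leaves uniqueness implicit, but the arguments are the same.
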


\begin{proof}
Let $\gamma \in \hH_i$. Then $\hat{\gamma} = \sum c_a \vartheta_a$ and
$\gamma = \ol{\hat{\gamma}}$. As $\vartheta_a( \kkk^{-i} x \wedge
\kkk^{-i} y) = \kkk^{-2i} u_{a,i} \vartheta_a( x \wedge y )$ for some
unit $u_{a,i}$, it follows that $\gamma = \sum_a \kkk c_a u_{a,i} 
\vartheta_a$ and the valuation of the coefficients in this sum is 
given by $val(\kkk c_a u_{a,i}) = val( c_a ) + 1 \geq 5-p$.
\end{proof}

For $\gamma \in \hH_i$ we write $val( \gamma ) = \min \{ val(c_a) 
\mid 2 \leq a \leq (p-1)/2\}$. Lemma \ref{low} has the following 
corollary.

\begin{corollary}
\label{mind}
Let $\gamma \in \hH_i$ with $v = val(\gamma)$.
\begin{items}
\item[\rm (a)]
$0 \geq v \geq 5-p$.
\item[\rm (b)]
$\gamma( \p^j \wedge \p^k ) \in \p^{j+k+v}$ for each $j,k \geq i$.
\end{items}
\end{corollary}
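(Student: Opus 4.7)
The plan is to derive both parts from the canonical decomposition $\gamma = \sum_{a=2}^{(p-1)/2} c_a \vartheta_a$ supplied by Lemma \ref{low}, noting that $v = \min_a \val(c_a)$ and $\val(c_a) \geq 5-p$ for each $a$.

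For part (b), I would first record that every Galois automorphism $\sigma_j$ (with $\gcd(j,p) = 1$) preserves valuation: as a ring automorphism of $\O$ it permutes the primes of $\O$, and since $\p$ is the unique maximal ideal we get $\sigma_j(\p^n) = \p^n$ for every $n$. Given $x \in \p^j$ and $y \in \p^k$, each of the two products $\sigma_a(x)\sigma_{1-a}(y)$ and $\sigma_{1-a}(x)\sigma_a(y)$ then lies in $\p^{j+k}$, whence $\vartheta_a(x \wedge y) \in \p^{j+k}$. Multiplying by $c_a$ gives $c_a \vartheta_a(\p^j \wedge \p^k) \subseteq \p^{j+k+\val(c_a)} \subseteq \p^{j+k+v}$, and summing over $a$ produces $\gamma(\p^j \wedge \p^k) \subseteq \p^{j+k+v}$.

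For part (a), the lower bound $v \geq 5-p$ is immediate from Lemma \ref{low}. To obtain $v \leq 0$, I would combine part (b) with the surjectivity of $\gamma$. By Remark \ref{gens}(b), $\p^i \wedge \p^i$ is generated as a $\Z_p$-module by $\{\kkk^j \wedge \kkk^k : i \leq j < k < i+d\}$. Applying part (b) to each such generator, its image lies in $\p^{j+k+v}$, and since the minimum of $j+k$ on this indexing set is $2i+1$, we deduce $\gamma(\p^i \wedge \p^i) \subseteq \p^{2i+1+v}$. The surjectivity of $\gamma$ onto $\p^{2i+1}$ then forces $v \leq 0$.

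The only delicate point is to use the sharper containment $\gamma(\p^i \wedge \p^i) \subseteq \p^{2i+1+v}$ obtained from the strict inequality $j < k$ in the generating set of Remark \ref{gens}(b), rather than the naive bound $\p^{2i+v}$ that would arise from substituting $j = k = i$ directly into part (b); the naive bound fails to contradict surjectivity when $v = 1$, so invoking the explicit generating set is essential.
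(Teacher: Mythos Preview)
Your proof is correct and follows essentially the same approach as the paper: part (b) is identical in spirit, and for part (a) the paper simply asserts ``not all coefficients $c_a$ can be in $\p$'' without justification, whereas you supply the missing argument via surjectivity and the generating set of Remark~\ref{gens}(b). Your observation that the strict inequality $j<k$ in the generating set is needed to get the extra $+1$ (and hence to rule out $v=1$) is a genuine point that the paper glosses over.
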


\begin{proof}
Write $\gamma = \sum c_a \vartheta_a$. \\
(a) This follows from Lemma \ref{low}. Note that not all 
coefficients $c_a$ can be in $\p$. \\
(b) Note that $\vartheta_a( \p^j \wedge \p^k) \leq \p^{j+k}$ for all $j,k$. 
Thus $c_a \vartheta_a( \p^j \wedge \p^k) \in \p^{j+k+v}$ for all $a, j, k$. 
\end{proof}

\section{A lower bound}

Throughout this section we assume that $p \geq 5$ is prime and $i \in 
\N_0$. Let $\gamma \in \hH_i$ and write $\gamma = \sum c_a \vartheta_a$
and $\lambda = \lambda(\gamma)$. 
Recall that $J( \gamma ) = \p^\lambda$. This section introduces a 
lower bound for $\lambda$. 

For each $x \in \Z$, let $e_x$ be an eigenvector of $\sigma$ with 
$val(e_x) = x$. Then $\sigma(e_x) = \omega^x e_x$. Recall that $\omega
\in \Z_p$ is a primitive $(p-1)$-th root of unity with $\omega \equiv
r \mod p$. Let $\sigma_a = \sigma^{k_a}$ and $\sigma_{1-a} = \sigma^{l_a}$
for certain $k_a, l_a \in \N$. 
The next three results follow from direct computations. 

\begin{lemma}
For $u,v \in \Z$ define $m_a(u,v) = \omega^{u k_a + v l_a} \in \Z_p$. 
For $c \in K$ it follows that
\[ \vartheta_a(c e_x \wedge e_y) =
    (\sigma_a(c) m_a(x,y) - \sigma_{1-a}(c) m_a(y,x)) e_{x}e_y.\]
\end{lemma}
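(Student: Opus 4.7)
The plan is to unfold the definition of $\vartheta_a$ directly and apply Corollary \ref{R2} to evaluate the relevant Galois automorphisms on the eigenvectors $e_x$ and $e_y$. Since this is a routine computation based on the multiplicativity of field automorphisms, I expect no real obstacle, only bookkeeping of exponents of $\omega$.

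First I would write
\[ \vartheta_a(c e_x \wedge e_y) = \sigma_a(c e_x) \sigma_{1-a}(e_y) - \sigma_{1-a}(c e_x) \sigma_a(e_y), \]
using the definition of $\vartheta_a$, and then expand $\sigma_a(c e_x) = \sigma_a(c) \sigma_a(e_x)$ and $\sigma_{1-a}(c e_x) = \sigma_{1-a}(c) \sigma_{1-a}(e_x)$ since Galois automorphisms are ring homomorphisms.

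Next I would apply Corollary \ref{R2} to the vectors $e_x$ and $e_y$: since $e_x$ is an eigenvector of $\sigma$ with $val(e_x) = x$, and $\sigma_a = \sigma^{k_a}$, $\sigma_{1-a} = \sigma^{l_a}$, we get $\sigma_a(e_x) = \omega^{k_a x} e_x$, $\sigma_{1-a}(e_x) = \omega^{l_a x} e_x$, and analogous formulas for $e_y$. Substituting these yields
\[ \sigma_a(c e_x) \sigma_{1-a}(e_y) = \sigma_a(c)\,\omega^{k_a x + l_a y}\, e_x e_y \]
and
\[ \sigma_{1-a}(c e_x) \sigma_a(e_y) = \sigma_{1-a}(c)\, \omega^{l_a x + k_a y}\, e_x e_y. \]

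Finally, recognizing the exponents as $m_a(x,y) = \omega^{k_a x + l_a y}$ and $m_a(y,x) = \omega^{k_a y + l_a x} = \omega^{l_a x + k_a y}$, I would subtract the two expressions and factor out $e_x e_y$ to conclude
\[ \vartheta_a(c e_x \wedge e_y) = \bigl(\sigma_a(c) m_a(x,y) - \sigma_{1-a}(c) m_a(y,x)\bigr) e_x e_y, \]
as required.
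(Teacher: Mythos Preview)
Your proof is correct and is precisely the direct computation the paper has in mind; the paper does not spell it out but simply states that this lemma (together with the two that follow) ``follow[s] from direct computations.'' There is nothing to add.
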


\begin{lemma}
For $x,y \in \Z$ define $t_a(x,y) = m_a(x,y)-m_a(y,x) \in \Z_p$. Then 
\begin{eqnarray*}
&& \gamma( \gamma( e_x \wedge e_y ) \wedge e_z)  \\
&=& (\sum_{a,b} c_a \sigma_a(c_b) t_b(x,y) m_a(x+y,z)
             - c_a \sigma_{1-a}(c_b) t_b(x,y) m_a(z,x+y)) e_{x}e_ye_z.
\end{eqnarray*}
\end{lemma}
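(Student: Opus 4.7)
The plan is to iterate the previous lemma. First, write $\gamma = \sum_b c_b \vartheta_b$ and apply the preceding lemma with scalar $c = 1$ to each summand to obtain
\[
\gamma(e_x \wedge e_y) \;=\; \sum_b c_b\bigl(m_b(x,y) - m_b(y,x)\bigr) e_x e_y \;=\; \eta\, e_x e_y,
\]
where $\eta := \sum_b c_b t_b(x,y) \in K$.

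Next, I would observe that $\sigma(e_x e_y) = \omega^x \omega^y e_x e_y = \omega^{x+y} e_x e_y$, so the product $e_x e_y$ is itself an eigenvector of $\sigma$ of valuation $x+y$. The preceding lemma therefore applies verbatim with $e_x e_y$ playing the role of the first eigenvector (of valuation $x+y$) and $e_z$ playing the role of the second. Expanding $\gamma = \sum_a c_a \vartheta_a$ once more gives
\[
\gamma(\eta e_x e_y \wedge e_z) \;=\; \sum_a c_a \bigl(\sigma_a(\eta) m_a(x+y, z) - \sigma_{1-a}(\eta) m_a(z, x+y)\bigr) e_x e_y e_z.
\]

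To conclude, I would use that each $m_b(u,v) = \omega^{u k_b + v l_b}$ lies in $\Z_p$, which is fixed pointwise by $Gal(K)$. Hence $\sigma_a(t_b(x,y)) = t_b(x,y)$ for all $a,b,x,y$, and consequently
\[
\sigma_a(\eta) \;=\; \sum_b \sigma_a(c_b) t_b(x,y), \qquad \sigma_{1-a}(\eta) \;=\; \sum_b \sigma_{1-a}(c_b) t_b(x,y).
\]
Substituting these into the previous display and rewriting the result as a single double sum over $(a,b)$ produces exactly the stated identity.

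The only step requiring genuine care is the identification of $e_x e_y$ as an eigenvector of valuation $x+y$ when reapplying the preceding lemma, which is really a matter of convention about how the eigenvectors $e_n$ are chosen; once that is pinned down, the remainder of the argument is routine algebraic bookkeeping and presents no substantive obstacle.
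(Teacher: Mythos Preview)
Your argument is correct and is exactly the direct computation the paper has in mind: apply the preceding lemma once to get $\gamma(e_x\wedge e_y)=\eta\,e_xe_y$ with $\eta=\sum_b c_b t_b(x,y)$, observe that $e_xe_y$ is again a $\sigma$-eigenvector of eigenvalue $\omega^{x+y}$, and apply the preceding lemma a second time with $c=\eta$, using that $t_b(x,y)\in\Z_p$ is Galois-fixed. Your closing caveat is harmless---the preceding lemma only uses the eigenvector property of $e_x$, not its particular normalisation, so replacing $e_{x+y}$ by $e_xe_y$ causes no difficulty.
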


Define $l_{a,b}(x,y,z), r_{a,b}(x,y,z) \in \Z_p$ and $\Gamma_\gamma(x,y,z)
\in K$ as
\begin{eqnarray*}
l_{a,b}(x,y,z) &=& 
     t_b(x,y) m_a(x+y,z) 
   + t_b(y,z) m_a(y+z,x)
   + t_b(z,x) m_a(z+x,y), \\
r_{a,b}(x,y,z) &=& 
     t_b(x,y) m_a(z,x+y) 
   + t_b(y,z) m_a(x,y+z)
   + t_b(z,x) m_a(y,z+x), \\
\Gamma_\gamma(x,y,z) &=&
\sum_{a,b} c_a \sigma_a(c_b) l_{a,b}(x,y,z) 
    - c_a \sigma_{1-a}(c_b) r_{a,b}(x,y,z).
\end{eqnarray*}

\begin{lemma}
\label{nestedtgamma}
For each $x,y,z \in \N_0$ it follows that
\begin{eqnarray*}
&& \gamma(\gamma(e_x \wedge e_y) \wedge e_z)
+ \gamma(\gamma(e_y \wedge e_z) \wedge e_x)
+ \gamma(\gamma(e_z \wedge e_x) \wedge e_y) \\
&=& \Gamma_\gamma(x,y,z) e_{x}e_ye_z.
\end{eqnarray*}
\end{lemma}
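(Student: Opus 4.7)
The plan is to prove Lemma \ref{nestedtgamma} by directly summing three cyclic instances of the formula established in the preceding lemma. The key observation is that the definitions of $l_{a,b}(x,y,z)$ and $r_{a,b}(x,y,z)$ have been rigged precisely so that they collect, respectively, the $c_a \sigma_a(c_b)$-coefficients and the $c_a \sigma_{1-a}(c_b)$-coefficients arising from the three cyclic terms. So the proof is essentially a bookkeeping verification.

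First I would apply the previous lemma verbatim to obtain the expression for $\gamma(\gamma(e_x \wedge e_y) \wedge e_z)$. Next I would apply the same formula with the cyclic substitution $(x,y,z) \mapsto (y,z,x)$ to get
\[
\gamma(\gamma(e_y \wedge e_z) \wedge e_x) = \sum_{a,b} \bigl(c_a \sigma_a(c_b) t_b(y,z) m_a(y+z,x) - c_a \sigma_{1-a}(c_b) t_b(y,z) m_a(x,y+z)\bigr) e_y e_z e_x,
\]
and analogously with $(x,y,z) \mapsto (z,x,y)$ for the third term.

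Then I would note that the scalar factor $e_x e_y e_z$ is invariant under cyclic permutation of $x,y,z$ (since multiplication in $\O$ is commutative), so it may be pulled out as a common factor from all three summands. Grouping the resulting sum by the coefficient pairs $c_a \sigma_a(c_b)$ and $c_a \sigma_{1-a}(c_b)$, the three contributions to the $c_a \sigma_a(c_b)$-part combine to exactly $l_{a,b}(x,y,z)$ by its definition, while the three contributions to the $c_a \sigma_{1-a}(c_b)$-part combine to exactly $r_{a,b}(x,y,z)$. Summing over $a,b$ yields $\Gamma_\gamma(x,y,z) e_x e_y e_z$.

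There is no real obstacle here; the lemma is a pure definitional reorganisation of the preceding one. The only point requiring a brief justification is the commutativity of $e_x e_y e_z$ under cyclic permutation, which is immediate since $\O$ is a commutative ring.
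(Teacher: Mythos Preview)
Your proposal is correct and is exactly the direct computation the paper has in mind; the paper simply states that this lemma (together with the two preceding ones) ``follows from direct computations'' without writing out the details. Your bookkeeping of the cyclic substitutions and the observation that $e_x e_y e_z$ is symmetric in the commutative ring $\O$ is precisely what is needed.
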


The following theorem proves a first step towards Theorems \ref{mt2}
and \ref{mt3}. 

\begin{theorem}
\label{lboundlambda}
Let $\gamma \in \hH_i$ with $v = val(\gamma)$ and write $\lambda = 
\lambda(\gamma)$. Then $\lambda \geq 3i+ 3+2v$.
Thus $\lambda \geq 3i+13-2p$ in general and $\lambda \geq 3i+3$ if 
$\gamma = c \vartheta_a$. 
\end{theorem}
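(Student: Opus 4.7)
The plan is to reduce to computing $J_\gamma$ on an eigenvector basis of $\p^i$ and then extract the valuation directly from the closed-form expression in Lemma \ref{nestedtgamma}. First I would use Lemma \ref{low} to write $\gamma = \sum_{a=2}^{(p-1)/2} c_a \vartheta_a$ with $\val(c_a) \geq v$. Then, using the theorem on eigenvectors of $\sigma$ recalled in Section 2, I fix an eigenvector $e_j$ of $\sigma$ with $\val(e_j) = j$ for each $j \in \{i, i+1, \ldots, i+d-1\}$, where $d = p-1$. Because each eigenspace of $\sigma$ is one-dimensional and the residues $j \bmod d$ exhaust $\{0, \ldots, d-1\}$ as $j$ ranges over $d$ consecutive integers, the elements $e_i, \ldots, e_{i+d-1}$ form a $\Z_p$-basis of $\p^i$. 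By multilinearity of $J_\gamma$, it then suffices to show $J_\gamma(e_x,e_y,e_z) \in \p^{3i+3+2v}$ for all $x,y,z \in \{i,\ldots,i+d-1\}$.

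Next I would invoke Lemma \ref{nestedtgamma} to obtain $J_\gamma(e_x,e_y,e_z) = \Gamma_\gamma(x,y,z)\,e_x e_y e_z$ with
\[
\Gamma_\gamma(x,y,z) = \sum_{a,b}\bigl(c_a\,\sigma_a(c_b)\,l_{a,b}(x,y,z) - c_a\,\sigma_{1-a}(c_b)\,r_{a,b}(x,y,z)\bigr).
\]
The coefficients $l_{a,b}, r_{a,b}$ lie in $\Z_p$ and so have non-negative valuation, and Galois automorphisms of $K/\Q_p$ preserve the valuation, giving $\val(\sigma_a(c_b)) = \val(\sigma_{1-a}(c_b)) = \val(c_b) \geq v$. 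Consequently every summand has valuation at least $2v$, and hence $\val(\Gamma_\gamma(x,y,z)) \geq 2v$.

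Finally, I use that $J_\gamma$ is alternating (as the Jacobiator of the anticommutative operation $\gamma$), so $J_\gamma(e_x,e_y,e_z) = 0$ unless $x,y,z$ are pairwise distinct. For distinct $x,y,z \in \{i,\ldots,i+d-1\}$ the minimum of $x+y+z$ is $i+(i+1)+(i+2) = 3i+3$, and $\val(e_x e_y e_z) = x+y+z$ since $\O$ is a discrete valuation ring. Combining yields $\val(J_\gamma(e_x,e_y,e_z)) \geq 3i+3+2v$, whence $\lambda \geq 3i+3+2v$. The two explicit consequences then follow by substitution: $v \geq 5-p$ in general by Corollary \ref{mind}(a), giving $\lambda \geq 3i+13-2p$; and $\gamma = c\vartheta_a \in \hH_i$ forces $c \in \U$ and hence $v = 0$, giving $\lambda \geq 3i+3$. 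No serious obstacle arises; the one technical point worth checking is the basis claim, which falls out cleanly from the one-dimensionality of the $\sigma$-eigenspaces of $\O$.
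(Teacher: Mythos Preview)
Your proposal is correct and follows essentially the same approach as the paper: evaluate $J_\gamma$ on eigenvectors via Lemma~\ref{nestedtgamma}, bound $\val(\Gamma_\gamma(x,y,z)) \geq 2v$ using that $l_{a,b},r_{a,b}\in\Z_p$ and Galois automorphisms preserve valuation, and then minimize $x+y+z$ over distinct indices $\geq i$. You supply a few more details than the paper (the explicit basis argument, the alternating property of $J_\gamma$, and the observation that $c\in\U$ forces $v=0$), but the structure is the same.
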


\begin{proof}
Recall that $J(\gamma) = \p^\lambda$. By Lemma \ref{nestedtgamma}, it follows
that $J(\gamma)$ is generated by $\{ \Gamma_\gamma(x,y,z) e_{x}e_ye_z \mid 
x>y>z \geq i\}$ as ideal. Thus $\lambda = \min \{ val( \Gamma_\gamma(x,y,z))
+x+y+z \mid x>y>z \geq i\}$. The term $\Gamma_\gamma(x,y,z)$ is combined of
elements in $\Z_p$ and coefficients $c_a$ and their images under the Galois
group. Recall that $5-p \leq v \leq 0$ by Corollary \ref{mind}. Thus
all of these elements are in $\p^{2v}$ and hence 
$\Gamma_\gamma(x,y,z) \geq 2 v$ for all $x,y,z$. The minimal 
choice for $x,y,z$ is $i+i+1+i+2 = 3i+3$. This yields the desired result.
\end{proof}

\section{Upper bounds}

Throughout this section we assume that $p \geq 5$ is prime and 
$i \in \N_0$. We consider $\gamma \in \hH_i$ and write $\gamma = \sum c_a 
\vartheta_a$ and $\lambda = \lambda(\gamma)$. Recall that $J(\gamma) = 
\p^\lambda$. This section 
shows that $\lambda$ is finite and it introduces an upper bound for it.

\subsection{The offset of a homomorphism}

Our first aim is to analyse the images $\gamma( \p^j \wedge \p^k )$ 
for all $j, k \geq i$. We define $\rho(j,k) \in \Z$ via 
\[ \gamma( \p^j \wedge \p^k ) = \p^{j+k+\rho(j,k)}.\]
Recall that $d = p-1$ is the $\Z_p$-dimension of each $\p^j$ and $\p^{j+d} 
= p \p^j$ holds. Thus for each $j, k \geq i$ it follows that
\[ \rho(j,k) = \rho(j+d,k) = \rho(j,k+d). \]
Hence there are only finitely many different values in the set
$\{ \rho(j, k) \mid j, k \geq i \}$; This is used in the first 
part of the following definition. Further, recall that every element 
$x \in K$ of valuation $v$ can be written (uniquely) as 
$x = \sum_{j \geq v} a_j \kkk^j$ with $a_j \in \{0, \ldots, p-1\}$.

\begin{definition}
Let $\gamma\in \hat{H}_i$. 
\begin{items}
\item[\rm (a)]
The {\em offset} $\rho(\gamma)$ of $\gamma$ is 
$\rho(\gamma) = \min \{ \rho(j,k) \mid j, k \geq i \}$.
\item[\rm (b)]
For $j, k \geq i$ let $a(j,k) \in \{0, \ldots, p-1\}$ defined by
\[ \gamma(\kkk^j \wedge \kkk^k) \equiv a(j,k) \kkk^{j+k+\rho(\gamma)} \bmod 
\p^{j+k+\rho(\gamma)+1}.\]
\end{items}
\end{definition}

The offset is an integer. An lower bound for it is given in the 
following lemma.

\begin{lemma}
Let $\gamma\in \hat{H}_i$. Then $5-p \leq \rho(\gamma)$.
\end{lemma}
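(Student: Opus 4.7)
The plan is to reduce the claim directly to Corollary \ref{mind}, which already bundles the two ingredients we need: a bound on $\val(\gamma)$ and a bound on the valuation of images $\gamma(\p^j \wedge \p^k)$.

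First I would fix $j, k \geq i$ arbitrary and set $v = \val(\gamma)$. By the definition of $\rho(j,k)$, we have the equality of ideals $\gamma(\p^j \wedge \p^k) = \p^{j+k+\rho(j,k)}$, while Corollary \ref{mind}(b) gives the containment $\gamma(\p^j \wedge \p^k) \subseteq \p^{j+k+v}$. Combining these two yields $\p^{j+k+\rho(j,k)} \subseteq \p^{j+k+v}$, which, since the chain $\O = \p^0 > \p^1 > \p^2 > \ldots$ is strictly decreasing, forces $\rho(j,k) \geq v$.

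Next I would invoke Corollary \ref{mind}(a) to get $v \geq 5-p$, so that $\rho(j,k) \geq 5-p$ for every $j, k \geq i$. Taking the minimum over all such pairs gives $\rho(\gamma) \geq 5-p$, which is exactly the claim. There is no real obstacle here: the work has already been done in Corollary \ref{mind}, and this lemma merely repackages its content in the language of the offset. The only thing to be careful about is that $\rho(j,k)$ is defined via an equality (so it is a well-defined integer, not just a lower bound), which is what legitimizes the implication $\p^{j+k+\rho(j,k)} \subseteq \p^{j+k+v} \Rightarrow \rho(j,k) \geq v$.
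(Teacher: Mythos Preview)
Your proof is correct and follows essentially the same route as the paper: both arguments establish $\gamma(\p^j \wedge \p^k) \subseteq \p^{j+k+5-p}$ for all $j,k \geq i$ and deduce $\rho(j,k) \geq 5-p$. The only cosmetic difference is that the paper re-derives this containment from Lemma~\ref{low} and the fact $\vartheta_a(\p^j \wedge \p^k) \subseteq \p^{j+k}$, whereas you invoke the already-packaged Corollary~\ref{mind}.
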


\begin{proof}
By Lemma \ref{low} the homomorphism $\gamma$ can be written as 
$\gamma = \sum c_a \vartheta_a$ with $val(c_a) \geq 5-p$. By 
\cite[Lemma 6]{EKS25} we note that $\vartheta_a( \p^j \wedge \p^k )
= \p^{j+k+\epsilon}$ with $\epsilon \in \{0,1\}$. Thus 
$\gamma( \p^j \wedge \p^k ) \subseteq \p^{j+k+5-p}$ and hence
$\rho(j,k) \geq 5-p$ for each $j,k$. 
\end{proof}

As examples, we exhibit the arrays of $a(j,k)$ for $p=7$ and the two
homomorphisms $\vartheta_2$ (on the left) and $\vartheta_3$ (on the 
right) in the range $0 \leq j,k < 6$ with $i=0$.
\[
\left( \begin{array}{ccccccc}
 0& 4& 4& 5& 6& 2 \\ 
 3& 0& 6& 6& 4& 2 \\
 3& 1& 0& 2& 2& 6 \\
 2& 1& 5& 0& 3& 3 \\
 1& 3& 5& 4& 0& 1 \\
 5& 5& 1& 4& 6& 0 \\ 
\end{array} \right ) 
\;\;\;\;\;\;\;
\left( \begin{array}{ccccccc}
  0& 2& 2& 0& 5& 5 \\ 
  5& 0& 2& 2& 0& 5 \\
  5& 5& 0& 2& 2& 0 \\
  0& 5& 5& 0& 2& 2 \\
  2& 0& 5& 5& 0& 2 \\
  2& 2& 0& 5& 5& 0 \\
\end{array} \right )
\]

A main part of our proof for the upper bound on $\lambda$ consists in
understanding the coefficients $a(j,k)$ as good as possible. A first step
towards this aim is the following lemma.
 
\begin{lemma}
\label{lem9}
Let $\gamma\in \hat{H}_i$, let $\rho = \rho(\gamma)$ and let $d = p-1$. 
\begin{items}
\item[\rm (a)] 
$a(j,j) = 0$ for all $j \geq i$.
\item[\rm (b)]
$a(j,k) \equiv -a(k,j) \bmod p$ for all $j,k \geq i$.
\item[\rm (c)]
$a(j,k) = a(j+d,k) = a(j,k+d)$ for all $j,k \geq i$.
\item[\rm (d)] 
$a(j, k) \equiv a(j+1, k)+a(j, k+1) \bmod p$ for all $j,k \geq i$.
\item[\rm (e)]
$a(j,j+1) = a(j,j+2)$ for all $j \geq i$.
\item[\rm (f)]
$a(j, j+d-2) = a(j+d-1, j+d-2)$ for all $j \geq i$.
\end{items}
\end{lemma}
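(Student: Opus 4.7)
The plan is to prove the six claims in sequence, drawing on three ingredients: antisymmetry of $\wedge$, the congruence $\kkk^d \equiv -p \bmod \p^{d+1}$ from Remark \ref{gens}(c), and the $\ttt$-equivariance of $\gamma$ together with $\ttt\kkk^j = \kkk^j + \kkk^{j+1}$. Parts (a) and (b) fall out of the definition: $\kkk^j\wedge\kkk^j = 0$ forces $a(j,j)=0$, while comparing leading coefficients in $\gamma(\kkk^j\wedge\kkk^k) = -\gamma(\kkk^k\wedge\kkk^j)$ gives $a(j,k)\equiv -a(k,j)\bmod p$. For (c) I would write $\kkk^d = -p + w$ with $w\in\p^{d+1}$, so that $\kkk^{j+d} = -p\,\kkk^j + w\kkk^j$ with $w\kkk^j \in \p^{j+d+1}$; applying $\gamma$ yields
\[
\gamma(\kkk^{j+d}\wedge\kkk^k) \equiv -p\,\gamma(\kkk^j\wedge\kkk^k) \pmod{\p^{j+k+d+\rho+1}}.
\]
Since $-p\,\kkk^{j+k+\rho}\equiv \kkk^{j+k+\rho+d}$ modulo $\p^{j+k+\rho+d+1}$, leading coefficients match and $a(j+d,k)=a(j,k)$; periodicity in the second argument is identical.

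The main engine is (d), which is the step I expect to demand the most care. Starting from $\gamma(\ttt\kkk^j\wedge\ttt\kkk^k)=\ttt\gamma(\kkk^j\wedge\kkk^k)$ and expanding via $\ttt=1+\kkk$, one obtains
\[
\gamma(\kkk^j\wedge\kkk^{k+1}) + \gamma(\kkk^{j+1}\wedge\kkk^k) + \gamma(\kkk^{j+1}\wedge\kkk^{k+1}) = \kkk\,\gamma(\kkk^j\wedge\kkk^k)
\]
after cancelling the common $\gamma(\kkk^j\wedge\kkk^k)$ term. The delicate point is that the third left-hand summand lies in $\p^{j+k+2+\rho}$ by the definition of the offset and so vanishes modulo $\p^{j+k+\rho+2}$; matching the coefficient of $\kkk^{j+k+\rho+1}$ on each side then delivers $a(j,k+1)+a(j+1,k)\equiv a(j,k)\bmod p$.

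Parts (e) and (f) are short corollaries of (a)--(d). For (e), apply (d) at $(j,j+1)$ and invoke $a(j+1,j+1)=0$ from (a). For (f), the plan is the chain
\[
a(j{+}d{-}1,j{+}d{-}2) \stackrel{(b)}{\equiv} -a(j{+}d{-}2,j{+}d{-}1) \stackrel{(d)}{\equiv} -a(j{+}d{-}1,j{+}d{-}1) - a(j{+}d{-}2,j{+}d) \pmod{p};
\]
then (a) kills the first summand, (c) rewrites $a(j+d-2,j+d)=a(j+d-2,j)$, and (b) converts this to $-a(j,j+d-2)$, so the two sign flips cancel. Since every quantity in sight lies in $\{0,\dots,p-1\}$, the resulting congruences promote to genuine equalities throughout.
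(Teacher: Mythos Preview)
Your proposal is correct and follows essentially the same route as the paper: antisymmetry for (a)--(b), Remark~\ref{gens}(c) for (c), $\ttt$-equivariance expanded via $\ttt=1+\kkk$ for (d), and then (e)--(f) as quick corollaries. The only cosmetic difference is in (f): the paper applies (d) directly at $(j{+}d{-}1,j{+}d{-}2)$ to get $a(j{+}d{-}1,j{+}d{-}2)\equiv a(j{+}d,j{+}d{-}2)+a(j{+}d{-}1,j{+}d{-}1)$ and then uses (a) and (c), whereas you conjugate by (b) first and undo it at the end---both are fine.
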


\begin{proof}
Throughout the proof let $j,k \geq i$. \\
(a) This follows directly, since $\gamma( \kkk^j \wedge \kkk^j ) = 0$. \\
(b) This is obtained by $\gamma( \kkk^j \wedge \kkk^k ) = - \gamma(
\kkk^k \wedge \kkk^j)$. \\
(c) First note that $\gamma( \kkk^{j+d} \wedge \kkk^k ) 
= \gamma( \kkk^d \kkk^j \wedge \kkk^k)$. Next, $\kkk^d \equiv -p \bmod
\p^{d+1}$, see Remark \ref{gens}. Thus calculating modulo 
$\p^{j+k+d+\rho+1}$, we obtain the following:
\begin{eqnarray*}
a(j+d, k) \kkk^{j+d+k+\rho} 
 &\equiv& \gamma(\kkk^{j+d} \wedge \kkk^k) \\
 &\equiv& \gamma( -p \kkk^j \wedge \kkk^k ) \\
 &=& -p \gamma( \kkk^j \wedge \kkk^k )  \\
 &\equiv& -p a(j,k) \kkk^{j+k+\rho} \\
 &\equiv& a(j,k) \kkk^d \kkk^{j+k+\rho} \\
 &=& a(j,k) \kkk^{j+k+d+\rho}. 
\end{eqnarray*}
Hence $a(j+d,k) = a(j,k)$ follows. Similarly, $a(j,k)= a(j,k+d)$ holds. \\
(d) 
Since $\gamma$ is bilinear and $\theta=\kappa+1$, it follows for $\alpha,
\beta \in \p^i$ that
\begin{eqnarray*}
\kappa\gamma(\alpha\wedge \beta)
 &=& \gamma(\theta\alpha\wedge \theta\beta)-\gamma(\alpha\wedge \beta) \\
 &=& \gamma(\kappa\alpha\wedge \kappa\beta)
     +\gamma(\kappa\alpha\wedge \beta)
     +\gamma(\alpha\wedge \kappa\beta).
\end{eqnarray*}
We apply this with $(\alpha, \beta)=(\kappa^j, \kappa^k)$. Using that
$\gamma( \kkk^{j+1} \wedge \kkk^{k+1} ) \in \p^{j+k+\rho+2}$, we obtain
the following modulo $\p^{j+k+\rho+2}$:
\begin{eqnarray*}
a(j,k) \kkk^{j+k+\rho+1} 
 &\equiv& \kappa \gamma(\kappa^j\wedge \kappa^k) \\
 &\equiv& \gamma(\kappa^{j+1}\wedge \kappa^k)
           +\gamma(\kappa^j\wedge \kappa^{k+1}) \\
 &\equiv& (a(j+1,k) + a(j,k+1)) \kkk^{j+k+\rho+1}.
\end{eqnarray*}
This yields the desired result. \\
(e) We combine the previous parts to $a(j,j+1) = a(j+1,j+1) + a(j, j+2)
= a(j,j+2)$. \\
(f) Again, we combine the previous parts and obtain that 
$a(j+d-1, j+d-2) = a(j+d, j+d-2) + a(j+d-1, j+d-1) = a(j+d, j+d-2) =
a(j, j+d-2)$. 
\end{proof}

\subsection{The Jacobi elements}

Let $\gamma \in \hH_i$. Recall that $\p^\lambda = J(\gamma) = 
\langle J_\gamma(u,v,w) \mid u,v,w \in \{\kkk^j \mid j \geq i\} \rangle$, 
where 
\[ J_\gamma(u,v,w) 
  = \gamma( \gamma( u \wedge v ) \wedge w)
  + \gamma( \gamma( v \wedge w ) \wedge u)
  + \gamma( \gamma( w \wedge u ) \wedge v).\]
We define for $j, k, l \geq i$ the integers $J(j,k,l) \in \{0, \ldots, 
p-1\}$ via
\[ J(j, k, l) := a(j, k) a(j+k+\rho,  l)
                +a(k, l) a(k+l+\rho,  j)
                +a(l, j) a(l+j+\rho,  k) \bmod p.\]

\begin{lemma}
Let $\gamma \in \hH_i$ with $\rho = \rho(\gamma)$ and let $j,k,l \geq i$.
Then 
\[ J_\gamma( \kkk^j, \kkk^k, \kkk^l) \equiv J(j,k,l) \kkk^{j+k+l+2 \rho} 
\bmod \p^{j+k+l+2 \rho+1}.\]
\end{lemma}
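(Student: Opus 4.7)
The plan is to expand $J_\gamma(\kkk^j, \kkk^k, \kkk^l)$ term by term using the first-order approximation
\[ \gamma(\kkk^s \wedge \kkk^t) = a(s,t)\, \kkk^{s+t+\rho} + r(s,t), \qquad r(s,t) \in \p^{s+t+\rho+1}, \]
which is exactly the definition of $a(s,t)$, and then collect coefficients modulo $\p^{j+k+l+2\rho+1}$. There are three nested evaluations in $J_\gamma$; by cyclic symmetry it suffices to analyse one of them carefully.

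First I would write
\[ \gamma\bigl(\gamma(\kkk^j \wedge \kkk^k)\wedge \kkk^l\bigr)
= a(j,k)\,\gamma(\kkk^{j+k+\rho}\wedge \kkk^l) + \gamma(r(j,k) \wedge \kkk^l). \]
Applying the approximation again to the first summand gives
\[ a(j,k)\,\gamma(\kkk^{j+k+\rho}\wedge \kkk^l)
\equiv a(j,k)\, a(j+k+\rho, l)\, \kkk^{j+k+l+2\rho}
\pmod{\p^{j+k+l+2\rho+1}}. \]
For the error term, since $r(j,k) \in \p^{j+k+\rho+1}$ and $\gamma(\p^m \wedge \p^n) \subseteq \p^{m+n+\rho}$ by definition of the offset, I get $\gamma(r(j,k) \wedge \kkk^l) \in \p^{(j+k+\rho+1) + l + \rho} = \p^{j+k+l+2\rho+1}$, so it vanishes modulo $\p^{j+k+l+2\rho+1}$. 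The two cyclic companions are handled identically.

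Summing the three contributions produces
\[ J_\gamma(\kkk^j, \kkk^k, \kkk^l)
\equiv N\, \kkk^{j+k+l+2\rho} \pmod{\p^{j+k+l+2\rho+1}}, \]
where $N = a(j,k) a(j+k+\rho, l) + a(k,l) a(k+l+\rho, j) + a(l,j) a(l+j+\rho, k)$ is an integer in $\{0, \ldots, 3(p-1)^2\}$. The only subtlety is that $J(j,k,l)$ is defined as this integer reduced modulo $p$, not as $N$ itself; to close the gap I would write $N = J(j,k,l) + mp$ with $m \in \Z$ and invoke $p \in \p^{p-1}$ (Remark~\ref{gens}(c)), which forces $mp\, \kkk^{j+k+l+2\rho} \in \p^{j+k+l+2\rho+(p-1)} \subseteq \p^{j+k+l+2\rho+1}$ since $p \geq 5$.

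There is no real obstacle here beyond this last bookkeeping point: the statement is essentially an unwinding of definitions together with the observation that ``reduce the integer coefficient mod $p$'' is harmless at this depth because $p$ lies deep enough in $\p$ to be invisible modulo $\p^{j+k+l+2\rho+1}$.
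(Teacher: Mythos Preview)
Your proof is correct and follows the same approach as the paper: expand each nested term $\gamma(\gamma(\kkk^j\wedge\kkk^k)\wedge\kkk^l)$ modulo $\p^{j+k+l+2\rho+1}$ using the defining congruence for $a(\cdot,\cdot)$, then sum the three cyclic contributions. Your treatment is in fact more careful than the paper's, which compresses the whole computation into three displayed lines and does not explicitly mention the error term $\gamma(r(j,k)\wedge\kkk^l)$ or the fact that reducing the integer coefficient $N$ modulo $p$ is harmless because $p\in\p^{p-1}$; both of these points are genuine and worth spelling out, so your version is a strict improvement in rigor over the paper's sketch.
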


\begin{proof}
We evaluate modulo $\p^{j+k+l+ 2\rho+1}$:
\begin{eqnarray*}
\gamma( \gamma( \kkk^j \wedge \kkk^k ) \wedge \kkk^l )
&\equiv& \gamma( a(j,k) \kkk^{j+k+\rho} \wedge \kkk^l ) \\
&\equiv& a(j,k) \gamma( \kkk^{j+k+\rho} \wedge \kkk^l ) \\
&\equiv& a(j,k) a(j+k+\rho, l) \kkk^{j+k+l+2\rho} 
\end{eqnarray*}
and this yields the desired result.
\end{proof}

As examples, we exhibit the arrays of $J(j,j+1,l)$ for $p=7$ and the two
homomorphisms $\vartheta_2$ (on the left) and $\vartheta_3$ (on the 
right) in the range $0 \leq j,l < 6$ with $i=0$.

\[
\left( \begin{array}{cccccc}
   0& 0& 4& 4& 4& 5 \\ 
   4& 0& 0& 1& 1& 3  \\
   0& 1& 0& 0& 6& 6  \\
   0& 5& 6& 0& 0& 0  \\
   3& 3& 6& 0& 0& 0  \\
   0& 5& 5& 2& 3& 0 \\
\end{array} \right) 
\;\;\;\;\;\;\;
\left( \begin{array}{cccccc}
   0& 0& 1& 1& 0& 0  \\
   1& 0& 0& 1& 1& 1  \\
   1& 1& 0& 0& 0& 0  \\
   6& 0& 0& 0& 0& 6  \\
   6& 6& 6& 6& 0& 0  \\
   0& 0& 0& 6& 6& 0 \\
\end{array} \right) 
\]

The Jacobi elements $J(j,k,l)$ will be our main tool in proving that
$\lambda$ is finite and in determining an upper bound for it. The 
following theorem will be a central key towards this.

\begin{theorem}
\label{bound}
Let $\gamma \in \hH_i$ and write $\rho = \rho(\gamma)$ and $d = p-1$. If 
there exist $j,k,l \in \{i, \ldots, i+d-1\}$ with $J(j,k,l) \neq 0$, then 
$\lambda = \lambda(\gamma)$ is finite with $\lambda \leq 3i+3d-6+2\rho$.
\end{theorem}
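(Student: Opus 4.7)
The plan is direct: use the preceding lemma to convert the hypothesis into a valuation statement for a single Jacobi element $J_\gamma(\kkk^j,\kkk^k,\kkk^l)$, and then optimize the choice of indices within the stated range.

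First I would verify that any triple $(j,k,l)$ with $J(j,k,l)\not\equiv 0 \bmod p$ has $j,k,l$ pairwise distinct. If $j=k$, then Lemma~\ref{lem9}(a) forces $a(j,k)=0$, killing the first summand in the definition of $J(j,k,l)$; the remaining two summands are $a(j,l)\,a(j+l+\rho,j) + a(l,j)\,a(j+l+\rho,j)$, which cancel via $a(l,j)=-a(j,l)$ from Lemma~\ref{lem9}(b). The symmetric cases $k=l$ and $j=l$ are analogous.

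Now fix a triple $(j,k,l)\in\{i,\ldots,i+d-1\}^3$ with $J(j,k,l)\ne 0$; by the previous step its entries are pairwise distinct. The lemma immediately preceding the theorem yields
\[ J_\gamma(\kkk^j,\kkk^k,\kkk^l) \equiv J(j,k,l)\,\kkk^{j+k+l+2\rho} \bmod \p^{j+k+l+2\rho+1}, \]
so $J_\gamma(\kkk^j,\kkk^k,\kkk^l)$ is a nonzero element of $\O$ of valuation exactly $j+k+l+2\rho$. In particular, $J(\gamma)$ is a nonzero ideal of $\O$, so $\lambda$ is finite, and the inclusion $\p^\lambda = J(\gamma) \supseteq \langle J_\gamma(\kkk^j,\kkk^k,\kkk^l)\rangle = \p^{j+k+l+2\rho}$ forces $\lambda \le j+k+l+2\rho$. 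Since $j,k,l$ are three distinct elements of $\{i,\ldots,i+d-1\}$, their sum is bounded by $(i+d-1)+(i+d-2)+(i+d-3) = 3i+3d-6$, giving the claimed inequality $\lambda\le 3i+3d-6+2\rho$.

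There is no substantial obstacle: the argument is essentially immediate from the preceding lemma once distinctness of $j,k,l$ is verified, and that distinctness is a direct consequence of parts (a) and (b) of Lemma~\ref{lem9}.
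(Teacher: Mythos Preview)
Your proof is correct and follows essentially the same approach as the paper: both use the preceding lemma to read off $\lambda\le j+k+l+2\rho$ from a nonzero $J(j,k,l)$, and then bound $j+k+l$ by noting that the three indices must be pairwise distinct within $\{i,\ldots,i+d-1\}$. Your explicit verification of the cancellation in the $j=k$ case (via Lemma~\ref{lem9}(a),(b)) is a slightly more detailed version of what the paper asserts in one line.
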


\begin{proof}
If $J(j,k,l) \neq 0$, then $J_\gamma( \kkk^j, \kkk^k, \kkk^l) \neq 0$
and thus $J(\gamma) \neq \{0\}$ as desired. The upper bound on $\lambda$
can also easily be read off, since 
$J_\gamma( \kkk^j, \kkk^k, \kkk^l) \equiv J(j,k,l) \kkk^{j+k+l+2\rho}
\bmod \p^{j+k+l+2\rho+1}$ and thus $\lambda \leq j+k+l+2\rho$. If two
of the elements in $\{j, k, l\}$ are equal, then $J(j,k,l) = 0$ by 
its definition and Lemma \ref{lem9}. Thus the maximal choice for 
$j,k,l$ with $J(j,k,l) \neq 0$ is $i+d-1, i+d-2, i+d-3$. Hence 
the upper bound $j+k+l+2\rho \leq 3i+3d-6 + 2\rho$ follows. 
\end{proof}

In the remainder of the section, we investigate the elements $J(j,k,l)$
with the aim to show that there is one non-trivial element among them
for $i \leq j, k, l < i+d$.

\begin{lemma}
\label{Jlemma}
Let $\gamma \in \hH_i$ and $i \leq j,k,l$. Write $d=p-1$.
\begin{items}
\item[\rm (a)]
$J(j, k, l)=J(l, j, k) \equiv -J(k, j, l) \bmod p$.
\item[\rm (b)]
$J(j, k, l)=J(j+d, k, l)=J(j, k+d, l)=J(j,k,l+d)$.
\item[\rm (c)]
$J(j, k, l) \equiv J(j+1, k, l)+J(j, k+1, l)+J(j,k,l+1) \bmod p$.
\end{items}
\end{lemma}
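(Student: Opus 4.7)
My overall approach is to derive (a) and (b) as immediate bookkeeping consequences of the definition of $J(j,k,l)$ combined with the symmetries of $a(\cdot,\cdot)$ already established in Lemma \ref{lem9}, and to obtain (c) by lifting the technique used in Lemma \ref{lem9}(d) from $\gamma$ up to the trilinear Jacobiator $J_\gamma$.

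For (a), the cyclic identity $J(j,k,l)=J(l,j,k)$ is transparent because the definition of $J(j,k,l)$ is already a cyclic sum over $(j,k,l)$; substituting $(l,j,k)$ permutes the three summands. For the transposition, I would substitute $a(k,j)\equiv -a(j,k)$, $a(j,l)\equiv -a(l,j)$ and $a(l,k)\equiv -a(k,l) \bmod p$ (from Lemma \ref{lem9}(b)) into each of the three summands of $J(k,j,l)$; each summand then matches the corresponding summand of $J(j,k,l)$ up to an overall sign, giving $J(k,j,l)\equiv -J(j,k,l) \bmod p$.

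For (b), I would simply substitute and apply Lemma \ref{lem9}(c), which says $a(\cdot,\cdot)$ is $d$-periodic in each argument. Every occurrence of a shifted entry in the six factors defining $J(j+d,k,l)$ (and analogously for shifts of $k$ or $l$) is of the form $a(\cdot+d,\cdot)$ or $a(\cdot,\cdot+d)$, and each one is absorbed by the periodicity, yielding $J(j+d,k,l)=J(j,k,l)$ and its analogues.

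For (c), the key idea is that $\gamma$ is $\ttt$-equivariant, so $J_\gamma$ is trilinear and $\ttt$-equivariant, i.e.\ $J_\gamma(\ttt u,\ttt v,\ttt w)=\ttt J_\gamma(u,v,w)$. Writing $\ttt=1+\kkk$ and expanding $J_\gamma(\ttt\kkk^j,\ttt\kkk^k,\ttt\kkk^l)$ by trilinearity produces eight terms, and subtracting $J_\gamma(\kkk^j,\kkk^k,\kkk^l)$ yields
\[
\kkk\, J_\gamma(\kkk^j,\kkk^k,\kkk^l)
 = \sum_{(\x_1,\x_2,\x_3)\neq(0,0,0)}
    J_\gamma(\kkk^{j+\x_1},\kkk^{k+\x_2},\kkk^{l+\x_3}).
\]
By the preceding lemma, each summand $J_\gamma(\kkk^{j+\x_1},\kkk^{k+\x_2},\kkk^{l+\x_3})$ has valuation at least $j+k+l+\x_1+\x_2+\x_3+2\rho$, so the five terms with $\x_1+\x_2+\x_3\geq 2$ lie in $\p^{j+k+l+2\rho+2}$. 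What remains on the right are the three terms with a single shift, and on the left we have $J(j,k,l)\kkk^{j+k+l+2\rho+1}$ modulo $\p^{j+k+l+2\rho+2}$. Reading off the coefficient of $\kkk^{j+k+l+2\rho+1}$ modulo $\p$ gives the congruence in (c).

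The only real obstacle is the bookkeeping of valuations in (c): one needs that $\gamma(\p^a\wedge\p^b)\subseteq\p^{a+b+\rho}$ for all $a,b\geq i$, which is built into the definition of $\rho=\rho(\gamma)$, so that each application of $\gamma$ raises the valuation by at least the prescribed amount and the higher-order terms can legitimately be discarded.
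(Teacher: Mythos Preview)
Your proofs of (a) and (b) match the paper's: both treat these as immediate from Lemma~\ref{lem9} and the definition of $J(j,k,l)$.

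For (c) your argument is correct but takes a genuinely different route. The paper stays entirely at the level of the coefficients $a(\cdot,\cdot)$: it applies Lemma~\ref{lem9}(d) twice to the product $a(j,k)\,a(j+k+\rho,l)$, once on the second factor to split off the $l+1$ term and once on the first factor to split off the $j+1$ and $k+1$ terms, obtaining
\[
a(j,k)\,a(j+k+\rho,l)=a(j+1,k)\,a(j+k+1+\rho,l)+a(j,k+1)\,a(j+k+1+\rho,l)+a(j,k)\,a(j+k+\rho,l+1),
\]
and then sums the three cyclic permutations. You instead lift the $\ttt$-equivariance trick of Lemma~\ref{lem9}(d) from $\gamma$ to the trilinear map $J_\gamma$, expand $J_\gamma(\ttt\kkk^j,\ttt\kkk^k,\ttt\kkk^l)=\ttt J_\gamma(\kkk^j,\kkk^k,\kkk^l)$ with $\ttt=1+\kkk$, and discard the higher-valuation terms using the congruence $J_\gamma(\kkk^j,\kkk^k,\kkk^l)\equiv J(j,k,l)\,\kkk^{j+k+l+2\rho}\bmod\p^{j+k+l+2\rho+1}$. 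Your approach is more conceptual and makes the parallel with Lemma~\ref{lem9}(d) transparent; the paper's is more elementary and avoids any appeal back to the ring elements, working purely with the integer residues $a(j,k)$. Both are short and neither requires anything not already established.
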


\begin{proof}
Parts (a) and (b) follow directly from Lemma \ref{lem9} and the
definition of $J$. It remains to prove (c). Lemma \ref{lem9} yields
\begin{eqnarray*}
a(j, k)a(j+k+\rho, l) 
  &=& a(j, k) a(j+k+1+\rho,  l)+a(j, k)b(j+k+\rho,  l+1)\\
  &=& (a(j+1, k)+a(j, k+1)) a(j+k+1+\rho,  l)\\
  && +a(j, k)a(j+k+\rho,  l+1).
\end{eqnarray*}
Replacing $(j,k,l)$ with $(k,l,j)$ and $(l,j,k)$ and summing up the 
resulting three term yields a proof for (c).
\end{proof}

\subsection{The proof of Theorem \ref{mt2}}
\label{pmt2}

We now proceed with the proof of the main Theorem \ref{mt2}. The lower
bound is already proved. It thus remains to show that $\lambda$ is finite.
We also determine an upper bound for it.

\begin{lemma}
\label{nonzero}
Let $\gamma \in \hH_i$. 
\begin{items}
\item[\rm (a)] 
There exists $j,k \in \{i, \ldots, i+d-1\}$ with $a(j,k) \neq 0$.
\item[\rm (b)]
If $a(j,k) \neq 0$, then either $a(j+1,k) \neq 0$ or $a(j,k+1) \neq 0$.
\item[\rm (c)]
There exists $j \in \{i, \ldots, i-d-1\}$ with $a(j,j+1) \neq 0$.
\end{items}
\end{lemma}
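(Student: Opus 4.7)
The plan is to prove (a), (b), (c) in order, using Lemma \ref{lem9} throughout. Recall that $a(j,k) \in \{0, \ldots, p-1\}$, so ``$a(j,k) \neq 0$'' and ``$a(j,k) \not\equiv 0 \bmod p$'' say the same thing.

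For (a), the offset $\rho(\gamma)$ is a minimum of integers $\rho(j,k)$ which is attained: by Lemma \ref{lem9}(c) the function $\rho(j,k)$ is periodic in each argument with period $d$, so it takes only finitely many values on $\{(j,k)\mid j,k\ge i\}$. Pick a minimizer $(j_0,k_0)$, which after shifting by multiples of $d$ via Lemma \ref{lem9}(c) may be taken in $\{i,\ldots,i+d-1\}^2$. The identity $\gamma(\p^{j_0}\wedge\p^{k_0})=\p^{j_0+k_0+\rho(\gamma)}$, together with the fact that every generator $\gamma(\kkk^m\wedge\kkk^n)$ with $m\ge j_0$, $n\ge k_0$ has valuation at least $m+n+\rho(\gamma)\ge j_0+k_0+\rho(\gamma)$, forces equality to be attained at some generator lying in the base range; by the definition of $a$, this gives $a(j_0,k_0)\ne 0$.

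For (b), Lemma \ref{lem9}(d) reads $a(j,k)\equiv a(j+1,k)+a(j,k+1)\bmod p$, so if both summands on the right vanished mod $p$, then so would $a(j,k)$.

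Part (c) is the main step; I argue by contradiction. Assume $a(j,j+1)=0$ for every $j\ge i$ and prove by induction on $n\ge 0$ that $a(j,j+n)=0$ for every $j\ge i$; combined with the antisymmetry from Lemma \ref{lem9}(b), this yields $a\equiv 0$ on $\{(j,k)\mid j,k\ge i\}$, contradicting part (a). The base cases $n=0$ and $n=1$ come respectively from Lemma \ref{lem9}(a) and the contradiction hypothesis. For the inductive step $n\to n+1$ with $n\ge 1$, apply the recurrence of Lemma \ref{lem9}(d) at $(j,j+n)$:
\[ a(j,j+n)\equiv a(j+1,j+n)+a(j,j+n+1)\bmod p. \]
Here $a(j,j+n)=0$ by induction and $a(j+1,j+n)=a(j+1,(j+1)+(n-1))=0$ by induction (using $n-1<n$), so $a(j,j+n+1)\equiv 0\bmod p$ and hence equals $0$. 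Once some $j\ge i$ with $a(j,j+1)\ne 0$ is produced, Lemma \ref{lem9}(c) shifts $j$ into $\{i,\ldots,i+d-1\}$.

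The main obstacle is the induction in (c). The key observation is that having two consecutive diagonals vanish, namely $a(j,j)=0$ from Lemma \ref{lem9}(a) and $a(j,j+1)=0$ from the contradiction hypothesis, is exactly what is needed for the recurrence of Lemma \ref{lem9}(d) to propagate vanishing diagonally across the entire region $k\ge j$; antisymmetry then extends it to all $j,k\ge i$.
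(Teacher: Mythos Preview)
Your proof is correct and follows essentially the same approach as the paper. Part (b) and part (c) match the paper's argument almost verbatim (induction on the diagonal distance $k-j$ using Lemma~\ref{lem9}(d), with antisymmetry and periodicity to finish); for part (a) you spell out what the paper summarizes as ``follows directly from the definition of $\rho(\gamma)$,'' and your expansion is sound---the only cosmetic point is that the periodicity of $\rho(j,k)$ itself is established in the text preceding the definition of the offset rather than in Lemma~\ref{lem9}(c), though Lemma~\ref{lem9}(c) (periodicity of $a$) would equally let you shift into the base range after producing a nonzero $a(j_0,k_0)$.
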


\begin{proof}
(a) This follows directly from the definition of $\rho(\gamma)$. \\
(b) This is obtained by Lemma \ref{lem9}(d). \\
(c) Suppose that all $a(j,j+1) = 0$. We show that this implies that
$a(j,k) = 0$ for all $j, k$ yielding a contradiction to (a). We assume 
that $j \leq k$ by Lemma \ref{lem9}(b)
and use induction on $k-j$. The assumpion asserts the claim for $k-j \leq 1$
and thus yields the initial step of the induction. Now assume that the claim
holds for all $i \leq j < k$ with $k-j \leq l$ for some $l > 1$. 
Then $0 = a(j,k) \equiv a(j+1,k) + a(j,k+1) \bmod p$ by Lemma \ref{lem9}(d). 
As $j+1-k \leq l$, the induction asserts $a(j+1,k) = 0$. Hence 
$a(j,k+1) = 0$ follows. This completes the inductive step.
\end{proof}

\begin{lemma}
\label{lastcor}
Let $\gamma \in \hH_i$. Write $c(k) = a(k+1,k)$ for $k \geq i$. If
$j \geq i$ and $t > 0$, then 
\[ a(j+t,j) \equiv \sum_{u=0}^{\lfloor \frac{t-1}{2}\rfloor}  
                         (-1)^{u}{{t-u-1}\choose{u}}c(j+u) \bmod p.\]
\end{lemma}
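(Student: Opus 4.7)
The plan is to prove the formula by induction on $t$, using Lemma \ref{lem9}(d) as the engine and Pascal's rule to match coefficients.

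For the base case $t=1$, the sum has the single term $u = 0$, which contributes $\binom{0}{0} c(j) = c(j) = a(j+1,j)$ by definition of $c$. For $t=2$, Lemma \ref{lem9}(d) applied to $(j+1,j)$ gives $a(j+1,j) = a(j+2,j) + a(j+1,j+1)$, and since $a(j+1,j+1) = 0$ by Lemma \ref{lem9}(a), we get $a(j+2,j) = c(j)$, matching $\binom{1}{0}c(j)$ as required.

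For the inductive step, I would apply Lemma \ref{lem9}(d) with first argument $j+t$ and second argument $j$:
\[
a(j+t,j) \equiv a(j+t+1,j) + a(j+t,j+1) \bmod p,
\]
which rearranges to $a(j+t+1,j) \equiv a(j+t,j) - a(j+t,j+1) \bmod p$. Both terms on the right are of the form $a(j'+t',j')$ with $t' < t+1$: the first is the inductive hypothesis at $(j,t)$, the second is the inductive hypothesis at $(j+1,t-1)$ since $a(j+t,j+1) = a((j+1)+(t-1),j+1)$. Substituting the inductive formula into both and reindexing the second sum by $v = u+1$ yields
\[
a(j+t+1,j) \equiv \sum_{u \geq 0}(-1)^u\binom{t-u-1}{u}c(j+u) + \sum_{v \geq 1}(-1)^{v}\binom{t-v-1}{v-1}c(j+v) \bmod p,
\]
where I use the convention $\binom{n}{k} = 0$ for $k < 0$ or $k > n$ so the upper summation limits need not be tracked explicitly. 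Combining the two sums term by term, the coefficient of $c(j+u)$ is $(-1)^u\bigl[\binom{t-u-1}{u} + \binom{t-u-1}{u-1}\bigr]$, which by Pascal's rule equals $(-1)^u\binom{t-u}{u}$, exactly matching the claimed formula at $t+1$.

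The main obstacle is purely bookkeeping: the upper summation index $\lfloor (t-1)/2 \rfloor$ behaves differently for even and odd $t$, so one must verify that the boundary term at $u = \lfloor t/2 \rfloor$ in the target sum is produced correctly. Adopting the zero-extension convention for binomial coefficients sidesteps a case split on parity and makes Pascal's identity the only nontrivial step. The $u=0$ term is handled separately since $\binom{t-1}{-1} = 0$ under this convention, so the first sum alone contributes $c(j)$ with coefficient $1 = \binom{t}{0}$, as required.
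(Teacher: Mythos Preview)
Your proof is correct and follows essentially the same approach as the paper: induction on $t$ using the recursion $a(j+t,j)\equiv a(j+t-1,j)-a(j+t-1,j+1)$ from Lemma~\ref{lem9}(d) together with Pascal's rule. The only difference is cosmetic: you adopt the convention $\binom{n}{k}=0$ for $k<0$ or $k>n$ to let the summation limits take care of themselves, whereas the paper tracks the upper index $\lfloor (t-1)/2\rfloor$ explicitly and handles the boundary term via a parity indicator $\delta$; both routes yield the same identity.
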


\begin{proof}
The result holds trivially for $t=1$. Let $t>1$ and denote $A_i=
\lfloor \frac{t-i}{2}\rfloor$. We use induction on $t$. Lemma \ref{lem9}
implies that 
\begin{eqnarray*}
   && a(j+t, j) \\
   &=& a(j+t-1, j)-a(j+t-1,  j+1)\\
   &=& \sum_{u=0}^{A_2}  (-1)^{u}{{t-u-2}\choose{u}}c(j+u)
        -\sum_{u=0}^{A_3}  (-1)^{u}{{t-u-3}\choose{u}}c(j+1+u)\\
   &=& c(j)+\sum_{u=1}^{A_2}(-1)^{u}{{t-u-2}\choose{u}}c(j+u)
        -\sum_{u=1}^{A_1}(-1)^{u-1}{{t-(u-1)-3}\choose{u-1}}c(j+u)\\
   &=& c(j)+\sum_{u=1}^{A_2}(-1)^{u}({{t-u-2}\choose{u}}
              +{{t-u-2}\choose{u-1}})c(j+u)\\
   && \hspace{1cm} +\delta (-1)^{\frac{\lfloor t-1\rfloor}{2}}
          {{t-A_1-2}\choose{A_3}}c(j+A_1),
\end{eqnarray*}
where $\delta=1$ if $A_1>A_2$  and $\delta=0$ if $A_1=A_2$.  
If $\delta=0$,   then the result follows from the last equation. It remains
to consider the case $\delta=1$. In this case $A_1=\frac{t-1}{2}$ and 
$A_3=\frac{t-3}{2}$. Hence $t-2-A_1=\frac{t-3}{2}$. Thus  
${{t-A_1-2}\choose{A_3}}={{\frac{t-3}{2}}\choose{\frac{t-3}{2}}}=1$.  
Also ${{t-A_1-1}\choose{A_1}}={{\frac{t-1}{2}}\choose{\frac{t-1}{2}}}=1$ 
and thus
\begin{eqnarray*}
&& \delta (-1)^{\frac{\lfloor t-1\rfloor}{2}}{{t-A_1-2}\choose{A_3}}c(j+A_1) \\
&=& (-1)^{A_1}c(j+A_1) \\
&=& (-1)^{A_1} {{t-A_1-1}\choose{A_1}}c(j+A_1). 
\end{eqnarray*}
Hence the result follows.
\end{proof}

{\bf Proof of Theorem \ref{mt2}.} \\
We now show that there exists integers $j,k,l \geq i$ such that $J(j,k,l) 
\neq 0$. Then Lemma \ref{Jlemma} implies that $J(j,k,l) \neq 0$ for some
$i \leq j, k, l < i+d$ and thus, in turn, Theorem \ref{bound} applies and
yields our desired result of Theorem \ref{mt2}.

For a contradiction we
assume that $J(j,k,l) = 0$ for all $j,k,l \geq i$. Let $m \ge i$ such that 
$m+\rho \equiv 1 \bmod d$. Lemma \ref{lem9} implies that
$c(j) = a(j+1,j) = a(j+2,j)$ and thus we obtain the following modulo $p$:
\begin{eqnarray*}
0 &=& J(j+1, j, m) \\
  &\equiv& a(j+1, j) a(j+1+j+\rho, m) \\
  && +a(j, m) a(j+m+\rho,  j+1) \\
  && +a(m, j+1) a(m+j+1+\rho,  j)\\
  &\equiv& c(j) a(j+1+j+\rho,  m) \\
  && +a(j, m) a(j+1,  j+1) \\
  && +a(m, j+1) a(j+2,  j)\\
  &\equiv& c(j) (a(2j+1+\rho,  m) -a(j+1, m)). \hspace{6.6cm} (1)
\end{eqnarray*} 
Choose $j$ so that $m < j < m+d$ and write $r = j-m$. Then 
$2j+1+\rho=2m+2r+1+\rho \equiv m+2r+2 \bmod d$. Further, using 
Lemma \ref{lastcor} yields that there exist $x(1), \ldots, 
x(r-1) \in \{0, \ldots, p-1\}$ with
\begin{eqnarray*}
&& a(2j+1+\rho, m)-a(j+1, m) \\
&=& a(m+2r+2,  m)-a(m+r+1, m) \\
&=& \sum_{u=0}^{\lfloor \frac{2r+1}{2}\rfloor} (-1)^{u}
       {{2r+2-u-1}\choose{u}}c(m+u)
    - \sum_{u=0}^{\lfloor\frac{r}{2} \rfloor}(-1)^{u}
        {{r+1-u-1}\choose{u}}c(m+u) \\
&=& (-1)^r {{2r+2-r-1}\choose{r}}c(m+r)
     + \sum_{u=1}^{r-1}x(u)c(m+u) \\
&=& (-1)^r(r+1)c(m+r)+\sum_{u=1}^{r-1}x(u)c(m+u). \hspace{6cm} (2)
\end{eqnarray*}
Combining Equations (1) and (2) yields
\[ 0=c(m+r)((-1)^r(r+1)c(m+r)+\sum_{u=1}^{r-1}x(u)c(m+u)). \hspace{4.6cm} (3)\]
Now we prove $c(m+r)=0$ for $0 < r \leq d-1$ by induction on $r$. For $r=1$
this follows directly from Equation (3). Let $r > 1$ and assume that $c(m+l)
= 0$ for $0 < l < r$. Then Equation (3) yields $c(m+r) = 0$. In summary,
$c(m+r) = 0$ for $0 < r \leq d-1$. Next, we show that $c(m) = 0$. For this,
note that $c(m)=-a(m, m+1)=-a(m+p-1, m+1)=-a(m+1+p-2, m+1)$. By Lemma
\ref{lastcor}, we obtain that $a(m+1+p-2, m+1)$ can be written as 
combination of $c(m+1), c(m+2), \ldots, c(m+\frac{p-3}{2})$. Hence $c(m)=0$
follows. In summary, we now proved $c(m)= \cdots=c(m+p-2)=0$. This implies
that $c(n)=0$ for all $n\ge i$ by using that 
$c(n)=a(n+1, n)=a(n+p, n+p-1)=c(n+p-1)$. 
Hence a contradiction to Lemma \ref{nonzero}(c) follows.

\subsection{The one-parameter case}
\label{pfmt3}

Throughout this section we assume that $p \geq 5$ is prime and $i \in 
\N_0$. This section considers the special case of $\gamma \in \hH_i$
with $\gamma = c \vartheta$ for some $c \in \U$ and $a\in 
\{2,\ldots,(p-1)/2\}$. 

Recall from Corollary \ref{R2} that $e_x$ is an eigenvector of $\sigma$ with 
$val(e_x) = x$ for each $x \in \Z$. Then $\sigma(e_x) = \omega^x e_x$. 
Also $\omega
\in \Z_p$ is a primitive $(p-1)$-th root of unity with $\omega \equiv
r \mod p$. Let $\sigma_a = \sigma^{k_a}$ and $\sigma_{1-a} = \sigma^{l_a}$
for certain $k_a, l_a \in \N$. 

\begin{definition}
\label{t} For $x,y,z\in \Z$ we define 
\begin{items}
\item[\rm (a)] 
$t_1(x, y, z)=c\sigma_a(c)(\omega^{2k_ax}\omega^{(k_a+l_a)y}\omega^{l_az}
                          -\omega^{(k_a+l_a)x}\omega^{2k_ay}\omega^{l_az})$,
\item[\rm (b)] 
$f(x, y, z) \in \{0, \ldots, p-1\}$ with 
$$t_1(x, y, z)+t_1(y, z, x)+t_1(z, x, y)=c^2a^{3i}(1-a)^{2i}f(x, y, z) 
    \bmod \p,$$
\item[\rm (c)] 
$t_2(x, y, z)=c\sigma_{1_a-a}(c)(
     -\omega^{(k_a+l_a)x}\omega^{2l_ay}\omega^{k_az}
     +\omega^{2l_ax}\omega^{(k_a+l_a)y}\omega^{k_az})$,
\item[\rm (d)] 
$g(x, y, z) \in \{0, \ldots, p-1\}$ with 
$$t_2(x, y, z)+t_2(y, z, x)+t_2(z, x, y)=c^2a^{2i}(1-a)^{3i}g(x, y, z)
     \bmod \p.$$ 
\end{items}
\end{definition}

\begin{lemma}
\label{L3}
\label{L4}
Let $x, y, z \in \Z$ and $\gamma = c \vartheta_a$.
\begin{items}
\item[\rm (a)]
$\gamma(\gamma(e_x \wedge e_y) \wedge e_z)
   =(t_1(x, y, z)+t_2(x, y, z))e_xe_ye_z$. 
\item[\rm (b)]
\begin{eqnarray*}
t_1(x, y, z)&\equiv& 
     c^2(a^{2x}(a(1-a))^y(1-a)^z-(a(1-a))^x(1-a)^{2y}(1-a)^z)\mod\p, \\
t_2(x, y, z)&\equiv&
     c^2(-(a(1-a))^x(1-a)^{2y}a^z+(1-a)^{2x}(a(1-a))^ya^z)\mod\p.
\end{eqnarray*}
\end{items}
\end{lemma}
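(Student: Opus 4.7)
The plan is to prove both parts by direct expansion: part (a) follows from a two-fold application of the formula for $\vartheta_a$ on eigenvectors of $\sigma$ provided by Corollary \ref{R2}, and part (b) follows by reducing the resulting monomials in $\omega$ modulo $\p$ via Lemma \ref{unitfixedgalois}.

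For part (a), first I would apply $\vartheta_a$ once. Since $\sigma_a(e_x) = \omega^{k_a x}e_x$ and $\sigma_{1-a}(e_y) = \omega^{l_a y}e_y$ by Corollary \ref{R2}, this gives
\[ \gamma(e_x \wedge e_y) = c\bigl(\omega^{k_a x + l_a y} - \omega^{l_a x + k_a y}\bigr)\, e_x e_y. \]
The observation that makes iteration possible is that $e_x e_y$ is itself a $\sigma$-eigenvector: since $\sigma$ is a ring automorphism fixing $\omega \in \Z_p$, we have $\sigma(e_x e_y) = \omega^{x+y} e_x e_y$. Hence Corollary \ref{R2} applies again and yields $\sigma_a(e_x e_y) = \omega^{k_a(x+y)} e_x e_y$, and similarly for $\sigma_{1-a}$. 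Moreover, the $\Z_p$-scalar $\omega^{k_a x + l_a y} - \omega^{l_a x + k_a y}$ is Galois-fixed. Applying $\vartheta_a$ a second time, distributing, and grouping the four resulting monomials according to whether they carry the factor $c\sigma_a(c)$ or $c\sigma_{1-a}(c)$ reproduces Definition \ref{t}(a) and (c) term by term, which proves the claim.

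For part (b), I would reduce each $\omega$-monomial modulo $\p$ using the congruences $\omega^{k_a} \equiv a \pmod p$ and $\omega^{l_a} \equiv 1-a \pmod p$. These follow from $\omega \equiv r \pmod p$ and the fact that $\sigma_a = \sigma^{k_a} = \sigma_{r^{k_a}}$ forces $a \equiv r^{k_a} \pmod p$, and similarly for $1-a$. Combined with Lemma \ref{unitfixedgalois}, which yields $c\sigma_a(c) \equiv c^2 \pmod \p$ and $c\sigma_{1-a}(c) \equiv c^2 \pmod \p$, substituting into Definition \ref{t}(a),(c) produces the stated congruences. The argument is essentially bookkeeping; the only conceptual step, and the one to handle with care, is the observation that a product of $\sigma$-eigenvectors is again a $\sigma$-eigenvector, which is exactly what validates iterating Corollary \ref{R2} for the nested bracket.
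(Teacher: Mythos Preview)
Your proposal is correct and follows essentially the same approach as the paper. The only cosmetic difference is in part (a): the paper expands the nested bracket by composing Galois automorphisms and applying Corollary \ref{R2} to each individual $e_u$ under $\sigma_{a^2}$, $\sigma_{a(1-a)}$, $\sigma_{(1-a)^2}$, whereas you compute the inner bracket first and then use that $e_xe_y$ is again a $\sigma$-eigenvector; both routes produce the same four monomials grouped by $c\sigma_a(c)$ and $c\sigma_{1-a}(c)$, and part (b) is handled identically in both.
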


\begin{proof}
(a) A straightforward calculation shows
\begin{eqnarray*}
&& \gamma(\gamma(e_x \wedge e_y) \wedge e_z) \\
&=& 
c\sigma_a(c)(\sigma_{a^2}(e_x)\, \sigma_{a(1{-}a)}(e_y)\,\sigma_{1{-}a}(e_z) 
- \sigma_{a(1{-}a)}(e_x)\, \sigma_{a^2}(e_y)\, \sigma_{1{-}a}(e_z)) \\
&& + 
c\sigma_{1-a}(c)(- \sigma_{a(1{-}a)}(e_x)\, \sigma_{(1{-}a)^2}(e_y)\, 
  \sigma_a(e_z)
+ \sigma_{(1{-}a)^2}(e_x)\, \sigma_{a(1{-}a)}(e_y)\, \sigma_a(e_z)).
\end{eqnarray*}
Corollary \ref{R2} shows that for each $u \in \N_0$ the element $e_u$ is 
an eigenvector of each of $\sigma_{a^2}$, $\sigma_{a(1-a)}$ and 
$\sigma_{(1-a)^2}$ with eigenvalues $\omega^{2k_au}, \omega^{(k_a+l_a)u}$ 
and $\omega^{l_au}$, respectively. It follows that
$\sigma_{a^2}(e_x)\sigma_{a(1-a)}(e_y)\sigma_{1-a}(e_z)
=(\omega^{2k_ax}\omega^{(k_a+l_a)y}\omega^{l_az})e_xe_ye_z$. 
Writing the other $3$ summands of $\gamma(\gamma(e_x \wedge e_y) \wedge e_z)$
in a similar way proves (a).\\
(b) 
We recall $\sigma_{a}=\sigma_r^{k_a}=\sigma_{r^{k_a}}$ and $\omega=r\mod \p$. Then it is easy to see from $p\O\le \p$ that $a=r^{k_a}\mod \p$, so $a=\omega^{k_a}\mod \p$, and similarly $1-a=\omega^{l_a}\mod \p$. The last part follows from applying $a=\omega^{k_a}\mod \p$, $1-a=\omega^{l_a}\mod \p$ in $t_1$ and $t_2$, and noting that $c=\sigma_a(c)=\sigma_{1-a}(c)$ modulo $\p$ by Lemma \ref{unitfixedgalois}.
\end{proof}

\begin{lemma}
\label{fandg}
Let $i \in \N_0$ and $x,y,z \geq i$. Write
$x=i+\bar{x}$, $y=i+\bar{y}$, and $z=i+\bar{z}$. Then 
\begin{align*}
   f(x,y,z)&= a^{2\bar{x}}(a(1-a))^{\bar{y}}(1-a)^{\bar{z}}-(a(1-a))^{\bar{x}}a^{2\bar{y}}(1-a)^{\bar{z}}+a^{2\bar{y}}(a(1-a))^{\bar{z}}(1-a)^{\bar{x}}\\
       &-(a(1-a))^{\bar{y}}a^{2\bar{z}}(1-a)^{\bar{x}}+a^{2\bar{z}}(a(1-a))^{\bar{x}}(1-a)^{\bar{y}}-(a(1-a))^{\bar{z}}a^{2\bar{x}}(1-a)^{\bar{y}}
\end{align*}
 and
\begin{align*}
    g(x,y,z)&=-(a(1-a))^{\bar{x}}(1-a)^{2\bar{y}}a^{\bar{z}}+(1-a)^{2\bar{x}}(a(1-a))^{\bar{y}}a^{\bar{z}}-(a(1-a))^{\bar{y}}(1-a)^{2\bar{z}}a^{\bar{x}}\\
        &+(1-a)^{2\bar{y}}(a(1-a))^{\bar{z}}a^{\bar{x}}-(a(1-a))^{\bar{z}}(1-a)^{2\bar{x}}a^{\bar{y}}+(1-a)^{2\bar{z}}(a(1-a))^{\bar{x}}a^{\bar{y}}.
\end{align*}
\end{lemma}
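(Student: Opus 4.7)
The plan is to unwind the definitions and carry out a direct calculation using Lemma \ref{L3}(b). By that lemma, each of $t_1(x,y,z)$ and $t_2(x,y,z)$ is, modulo $\p$, a two-term expression whose terms are monomials in $c$, $a$, and $1-a$. The first step is to form the cyclic sums $t_1(x,y,z)+t_1(y,z,x)+t_1(z,x,y)$ and $t_2(x,y,z)+t_2(y,z,x)+t_2(z,x,y)$, and to expand each factor $(a(1-a))^{\alpha}$ as $a^{\alpha}(1-a)^{\alpha}$. This produces six signed monomials of the form $\pm c^2 a^{A}(1-a)^{B}$ for each cyclic sum.

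The second step is the substitution $x=i+\bar x$, $y=i+\bar y$, $z=i+\bar z$. For the $t_1$-sum, each of the six monomials contributes, before substitution, a single factor $a^{2\xi}$, a single factor $(a(1-a))^{\eta}$, and a single factor $(1-a)^{\zeta}$, where $(\xi,\eta,\zeta)$ runs through cyclic permutations of $(x,y,z)$ (and also through the same permutations with a sign flip). Hence the $a$-exponent is $2\xi+\eta$ and the $(1-a)$-exponent is $\eta+\zeta$. Separating the $i$-contributions gives $2\xi+\eta = 3i + 2\bar\xi+\bar\eta$ and $\eta+\zeta = 2i + \bar\eta+\bar\zeta$, uniformly across all six monomials. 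Thus one can pull out a common factor of $c^2 a^{3i}(1-a)^{2i}$, and comparison with Definition \ref{t}(b) identifies the remaining bracketed expression as exactly the claimed formula for $f(x,y,z)$.

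The same argument applies to the $t_2$-sum. Here each monomial has shape $a^{\xi}(1-a)^{2\eta}(a(1-a))^{\zeta}$, so the $a$-exponent is $\xi+\zeta = 2i+\bar\xi+\bar\zeta$ and the $(1-a)$-exponent is $2\eta+\zeta = 3i+2\bar\eta+\bar\zeta$. Hence the common factor is now $c^2 a^{2i}(1-a)^{3i}$, and reading off the residue via Definition \ref{t}(d) gives the formula for $g(x,y,z)$.

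The only obstacle is the bookkeeping of the six cyclic permutations and their signs; no deeper input than Lemma \ref{L3}(b) and the identity $(a(1-a))^{\alpha} = a^{\alpha}(1-a)^{\alpha}$ is required. I would lay out the six monomials of each sum in a small table, verify the uniform extraction of $a^{3i}(1-a)^{2i}$ (respectively $a^{2i}(1-a)^{3i}$) row by row, and then compare termwise to the displayed formulas.
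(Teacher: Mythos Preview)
Your proposal is correct and follows essentially the same approach as the paper: both arguments use Lemma~\ref{L3}(b) to express $t_1$ and $t_2$ modulo $\p$ as monomials in $a$ and $1-a$, substitute $x=i+\bar x$, $y=i+\bar y$, $z=i+\bar z$, and then extract the common factors $c^2a^{3i}(1-a)^{2i}$ and $c^2a^{2i}(1-a)^{3i}$ from the respective cyclic sums to read off $f$ and $g$ via Definition~\ref{t}. The only cosmetic difference is that the paper factors each $t_1(\cdot,\cdot,\cdot)$ individually before summing, whereas you describe forming the sum first; the computation is the same either way.
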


\begin{proof}
Using Lemma \ref{L4}, a technical calculation shows
\begin{align*}
    t_1(x,y,z) \bmod \p & =c^2(a^{2i+2\bar{x}}(a(1-a))^{i+\bar{y}}(1-a)^{i+\bar{z}}-(a(1-a))^{i+\bar{x}}a^{2i+2\bar{y}}(1-a)^{i+\bar{z}})\\
    =&c^2a^{3i}(1-a)^{2i}(a^{2\bar{x}}(a(1-a))^{\bar{y}}(1-a)^{\bar{z}}-(a(1-a))^{\bar{x}}a^{2\bar{y}}(1-a)^{\bar{z}}).
\end{align*}
Similarly we can get , 
    $$t_1(y,z,x)=c^2a^{3i}(1-a)^{2i}(a^{2\bar{y}}(a(1-a))^{\bar{z}}(1-a)^{\bar{x}}-(a(1-a))^{\bar{y}}a^{2\bar{z}}(1-a)^{\bar{x}})\mod \p,$$
$$t_1(z,x,y)=c^2a^{3i}(1-a)^{2i}(a^{2\bar{z}}(a(1-a))^{\bar{x}}(1-a)^{\bar{y}}-(a(1-a))^{\bar{z}}a^{2\bar{x}}(1-a)^{\bar{y}})\mod \p.$$

    Therefore, $t_1(x, y, z)+t_1(y, z, x)+t_1(z, x, y)=c^2a^{3i}(1-a)^{2i}f(x, y, z)\mod \p$. Similarly, we obtain $t_2(x, y, z)+t_2(y, z, x)+t_2(z, x, y)=c^2a^{2i}(1-a)^{3i}g(x, y, z)\mod \p$.
\end{proof}

\begin{lemma}
Let $i \in \N_0$ and $x,y,z \geq i$. Then 
$J_{\gamma}(e_x, e_y, e_z)\in \p^{x+y+z+1}$ if and only if 
\begin{equation}
a^{3i}(1-a)^{2i}f(x, y, z) + a^{2i}(1-a)^{3i}g(x, y, z) \equiv 0 \bmod{p}
\label{jc}
\tag{4}
\end{equation}
\end{lemma}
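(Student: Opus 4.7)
The plan is to expand $J_\gamma(e_x,e_y,e_z)$ directly using the ingredients prepared in Lemma \ref{L3} and Definition \ref{t}, then read off the coefficient of $e_xe_ye_z$ modulo the first power of $\p$. This is essentially an assembly proof; I do not foresee a real obstacle, only a small care-point regarding the identification of congruences modulo $\p$ in $\O$ and modulo the rational prime $p$ in $\Z$.

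First I would unfold the definition of the Jacobi element,
\[
J_\gamma(e_x,e_y,e_z)=\gamma(\gamma(e_x\wedge e_y)\wedge e_z)+\gamma(\gamma(e_y\wedge e_z)\wedge e_x)+\gamma(\gamma(e_z\wedge e_x)\wedge e_y),
\]
and apply Lemma \ref{L3}(a) to each summand. Since multiplication in $K$ is commutative, the factor $e_xe_ye_z$ is common to all three summands and can be pulled out, yielding
\[
J_\gamma(e_x,e_y,e_z)=\Big(\sum_{\mathrm{cyc}}t_1(x,y,z)+\sum_{\mathrm{cyc}}t_2(x,y,z)\Big)\,e_xe_ye_z,
\]
where each cyclic sum runs over the three cyclic permutations of $(x,y,z)$.

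Next I would invoke Definition \ref{t}(b) and (d), which rewrite the two cyclic sums modulo $\p$ as
\[
\sum_{\mathrm{cyc}}t_1(x,y,z)+\sum_{\mathrm{cyc}}t_2(x,y,z)\equiv c^2\bigl(a^{3i}(1-a)^{2i}f(x,y,z)+a^{2i}(1-a)^{3i}g(x,y,z)\bigr)\bmod\p.
\]
Because $\val(e_xe_ye_z)=x+y+z$, multiplication by $e_xe_ye_z$ carries the class of the coefficient in $\O/\p$ isomorphically onto $\p^{x+y+z}/\p^{x+y+z+1}$. Hence $J_\gamma(e_x,e_y,e_z)\in\p^{x+y+z+1}$ if and only if the coefficient itself lies in $\p$.

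Finally, since $c\in\U$ the factor $c^2$ is a unit, so the condition reduces to whether the rational integer $a^{3i}(1-a)^{2i}f(x,y,z)+a^{2i}(1-a)^{3i}g(x,y,z)$ lies in $\p$. Because $p\in\p$ and $\O/\p\cong\F_p$, one has $\Z\cap\p=p\Z$, so this integer lies in $\p$ exactly when it vanishes modulo the rational prime $p$. This is precisely congruence (\ref{jc}), completing the plan.
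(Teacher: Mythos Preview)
Your proposal is correct and follows essentially the same approach as the paper's proof: expand $J_\gamma(e_x,e_y,e_z)$ via Lemma~\ref{L3}(a), collect the cyclic sums into $c^2\bigl(a^{3i}(1-a)^{2i}f+a^{2i}(1-a)^{3i}g\bigr)e_xe_ye_z$ modulo $\p^{x+y+z+1}$ using Definition~\ref{t}(b)(d) (the paper routes this through Lemma~\ref{fandg}, which amounts to the same thing), and then use $\val(e_xe_ye_z)=x+y+z$. Your extra remarks that $c^2\in\U$ can be stripped and that $\Z\cap\p=p\Z$ justifies passing from ``$\equiv 0\bmod\p$'' to ``$\equiv 0\bmod p$'' make explicit what the paper leaves implicit.
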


\begin{proof}
Using Lemma \ref{fandg} it follows that
 \begin{align*}
    &J_{\gamma}(e_x, e_y, e_z)\\
    =&(t_1(x, y, z)+t_2(x, y, z)+t_1(y, z, x)+t_2(y, z, x)+t_1(z, x, y)+t_2(z, x, y))e_xe_ye_z\\
    =&c^2(a^{3i}(1-a)^{2i}f(x, y, z)+a^{2i}(1-a)^{3i}g(x, y, z))e_xe_ye_z\mod \p^{x+y+z+1}.
\end{align*}
The last line of the above equation follows from the fact 
that $val(e_xe_ye_z)=x+y+z$. Hence, the result.
\end{proof}

{\bf Proof of Theorem \ref{mt3}.} \\
Theorem \ref{lboundlambda} shows $\lambda \geq 3i+3$. Since $|L_{i,\lambda}(\gamma)|=p^{\lambda-i}$, we have that
$|L_{i,\lambda}(\gamma)| = p^{2i+3+y}$
for some $y \in \N_0$. 
We aim to show that $y\le 2$. Suppose for a contradiction, $y\ge 3$. Then $\lambda \geq 3i+6$. Hence $J_{i}(\gamma) \subseteq \p^{3i+6}$.  Recall the functions $f$ and $g$ from Definition \ref{t}. Let 
\begin{eqnarray*}
F_1 &=& f(i,i+1,i+2), \\
G_1 &=& g(i,i+1,i+2), \;\; \mbox{ and }\\
E_1 &=& a^{3i}(1-a)^{2i}F_1+a^{2i}(1-a)^{3i}G_1.
\end{eqnarray*}
Then $E_1 \equiv 0 \bmod p$ by choosing $(x, y, z)=(i, i+1, i+2)$ in 
Equation \eqref{jc}.  Similarly,
\begin{eqnarray*}
F_2 &=& f(i,i+1,i+4), \\
G_2 &=& g(i,i+1,i+4), \;\; \mbox{ and } \\
E_2 &=& a^{3i}(1-a)^{2i}F_2+a^{2i}(1-a)^{3i}G_2.
\end{eqnarray*}
Thus $E_2 \equiv 0 \bmod p$ by choosing $(x, y, z)=(i, i+1, i+4)$ in 
Equation \eqref{jc}.
Further, we have that $F_2E_1-F_1E_2 \equiv 0 \bmod p$. On the other hand,
direct calculation yields that 
$F_2E_1-F_1E_2 = a^{2i}(1-a)^{3i}\big(F_2G_1-F_1G_2\big)$.
Hence $a^{2i}(1-a)^{3i}\big(F_2G_1-F_1G_2\big) \equiv 0 \bmod p$.
Now a direct calculation shows that
$F_2G_1-F_1G_2=a^4(1-a)^4(2a-1)^3(a^2+a-1)(a^2-3a+1)$, and we note
that $a, 1-a, 2a-1$ are all non-zero modulo $p$ since $a\in 
\{2, \ldots (p-1)/2\}$.  So 
\[ (a^2+a-1)(a^2-3a+1) \equiv 0 \bmod p.\]
A straightforward calculation using Lemma \ref{fandg} shows that 
\begin{align*}
 &F_1=f(i, i+1, i+2)=a(1-a)^2(2a-1)(a^2+a-1),\\
 &G_1=g(i, i+1, i+2)=a^2(1-a)(2a-1)(a^2-3a+1).
\end{align*}
Since $E_1=a^{3i}(1-a)^{2i}F_1+a^{2i}(1-a)^{3i}G_1 \equiv 0 \bmod p$, it 
follows that if $a^2-3a+1 \equiv 0 \bmod p$,  then $a^2+a-1 \equiv 0 
\bmod p$. Next, if $a^2-3a+1 \equiv 0 \bmod p$ then $a^2+a-1
=(a^2-3a+1)+2(2a-1)=2(2a-1) \ne 0 \bmod p$. Thus $a^2-3a+1\ne 0 \bmod p$. 
Similarly, $a^2+a-1\ne 0 \bmod p$. Therefore, 
$(a^2+a-1)(a^2-3a+1)\ne0 \bmod p$. This is a contradiction, 
hence $y\le 2$.

We consider the special case $p=5$. Here $a=2$ holds. Taking $a=2$ in 
$F_1$, we get that $f(i, i+1, i+2) = 0 \bmod 5$ and $g(i, i+1, i+2) \ne 0 
\bmod 5$. Hence $a^{3i}(1-a)^{2i}f(i, i+1, i+2) 
+ a^{2i}(1-a)^{3i}g(i, i+1, i+2) \ne 0 \bmod{p}$.
By Equation \eqref{jc}, it follows that $J_{\gamma}(e_i, e_{i+1}, e_{i+2}) 
\not \in \p^{3i+4}$.  Therefore $J(\gamma)\not \subseteq \p^{3i+4}$ and 
hence $\lambda=3i+3$. This implies that $y = 0$ in this special case.

\bibliographystyle{abbrv}

\end{document}